\documentclass{amsart}
\usepackage{amsmath,amssymb,amscd,latexsym,amsxtra,amsfonts,bbold,amsthm}
\usepackage{tkz-euclide}
\usetkzobj{all}
\usepackage{nicefrac}
\usepackage[scr=boondoxo,scrscaled=1.05]{mathalfa}
\usepackage{enumerate}
\usepackage{leftidx}
\usepackage{dsfont}
\usepackage[all]{xy}
\usepackage{graphicx}
\usepackage{relsize}

\usepackage[mathscr]{eucal}
\usepackage{calrsfs}
\usepackage{fge}
\usepackage{pst-all}
\usepackage{mathtools}
\usepackage[top=2cm, bottom=2cm, left=1.5cm, right=1.5cm]{geometry}

\textwidth=18cm

\usepackage[ps2pdf]{hyperref}
\definecolor{britishracinggreen}{rgb}{0.0, 0.26, 0.15}
\definecolor{amaranth}{rgb}{0.9, 0.17, 0.31}
\hypersetup{
colorlinks=true,
linkcolor=amaranth,
citecolor=magenta,
urlcolor=cyan,
}

\pagestyle{plain}

\newtheorem{thm}{Theorem}
\newtheorem{cor}[thm]{Corollary}
\newtheorem{lem}[thm]{Lemma}
\newtheorem{prop}[thm]{Proposition}

\theoremstyle{definition}
\newtheorem{defn}{Definition}
\newtheorem{exmp}[thm]{Example}

\newtheorem{rem}[thm]{Remark}

\providecommand{\customgenericname}{}
\newcommand{\newcustomtheorem}[2]{%
  \newenvironment{#1}[1]
  {%
   \renewcommand\customgenericname{#2}%
   \renewcommand\theinnercustomgeneric{##1}%
   \innercustomgeneric
  }
  {\endinnercustomgeneric}
}

\newcustomtheorem{customthm}{Theorem}
\newcustomtheorem{customlem}{Lemma}
\newcustomtheorem{customprob}{Problem}
\newcustomtheorem{customcor}{Corollary}
\newcustomtheorem{customprop}{Proposition}
\newcustomtheorem{customexmp}{Example}
\newcustomtheorem{customdefn}{Definition}


\newcommand{\oc}[1]{\bot_{#1}}

\mathchardef\mhyphen="2D

\newcommand\restr[2]{{
  \left.\kern-\nulldelimiterspace 
  #1 
  \vphantom{\big|} 
  \right|_{#2} 
  }}


\DeclareMathOperator{\spn}{span}

\DeclareMathOperator{\ld}{d}

\def\oconverges{\xrightarrow{o}}


\newcommand{\s}{\mathcal S}
\newcommand{\p}{\mathcal P}

\newcommand{\f}{\mathcal F}

\newcommand{\el}{\mathcal L}
\newcommand{\e}{\mathcal E}
\newcommand{\ci}{\mathcal C}

\newcommand{\ff}{\mathscr F}
\newcommand{\uu}{\mathscr U}


\newcommand{\N}{\mathds N}
\newcommand{\R}{\mathds R}




\begin{document}

\title{Order topology on orthocomplemented posets of linear subspaces of a pre-Hilbert space}
\author{D. Buhagiar}
\address{
David Buhagiar \\
Department of Mathematics \\
Faculty of Science \\
University of Malta \\
Msida MSD 2080, Malta} \email {david.buhagiar@um.edu.mt}

\author{E. Chetcuti}
\address{
Emmanuel Chetcuti,
Department of Mathematics\\
Faculty of Science\\
University of Malta\\
Msida MSD 2080  Malta} \email {emanuel.chetcuti@um.edu.mt}

\author{H. Weber}
\address{Hans Weber\\
Dipartimento di Scienze matematiche, informatiche e fisiche\\
Universit\`a degli Studi di Udine\\
1-33100 Udine\\
Italia}
\email{hans.weber@uniud.it}

\date{\today}
\begin{abstract}
Motivated by the Hilbert-space model for quantum mechanics, we define a pre-Hilbert space logic to be a pair $(S,\el)$, where $S$ is a pre-Hilbert space and $\el$ is an
orthocomplemented poset of orthogonally closed linear subspaces of $S$, closed w.r.t. finite dimensional perturbations, (i.e. if $M\in\el$ and $F$ is a finite dimensional linear
subspace of $S$, then $M+F\in \el$).   We study the order topology $\tau_o(\el)$ on $\el$ and show that completeness of $S$ can by characterized
by the separation properties of the topological space $(\el,\tau_o(\el))$.  It will be seen that the
remarkable lack of a proper  probability-theory on pre-Hilbert space logics -- for an incomplete $S$ -- comes out elementarily from this topological characterization.
\end{abstract}
\subjclass[2000]{Primary: 46C15, 46C05; Secondary: 46L30, 03G12}
\maketitle

\section{Introduction}

This paper  contributes to the investigation of ``nonstandard
logics'' (see, for example, \cite{Pi2, Har1, Ho2, Har2, HoMaPlZu}) by considering orthocomplemented posets of linear subspaces of a pre-Hilbert space and
states (=probability measures) on them as ``quantum logics'' \cite{AmAr, GrKe, Ho1,  HaPt, DvPu, DvNePu, PtWe, BuCh,Tu}.

We define a pre-Hilbert space logic $(S,\el)$, and show that the (metric) completeness of $S$ can by characterized by the topological properties of $(\el,\tau_o(\el))$,
where $\tau_o(\el)$ is the order topology on $\el$.  It will also be shown that the
remarkable lack of a proper  probability-theory on pre-Hilbert space logics -- for an incomplete $S$ -- comes out elementarily from this topological characterization.

\subsection{The order topology on a poset}

Let $(P,\le)$ be a poset.  For any $a,b\in P$ let $[a,b]:=\{x\in P:a\le x\le b\}$, $[a,\rightarrow]:=\{x\in P:a\le x\}$ and $[\leftarrow,a]:=\{x\in P:x\le a\}$.
If $P$ is bounded,  its top and bottom elements are denoted by $1$ and $0$, respectively. For a subset $X$ of $P$ we shall write $\bigvee X$ and
$\bigwedge X$ for the supremum (resp. infimum) of $X$ (on the condition that they exist).
When the supremum (and resp. the infimum) of every bounded from above (resp. every bounded from below) subset of $P$ exists then
 $P$ is said to be \emph{Dedekind complete}. 
An element $a$ of a bounded poset $P$ is called an \emph{atom} if $0\neq a$ and $[0,a]=\{0,a\}$.   A bounded poset $(P,\le,0,1)$ is called an
\emph{orthocomplemented poset} if it has an orthocomplementation, i.e. a function $':P\to P$ satisfying {\rm(i)}~$a''=a$,
{\rm(ii)}~if $a\le b$, then $b'\le a'$, and {\rm(iii)}~$a\vee a'=1$, for every $a,b\in P$.
An \emph{ortholattice} is an orthocomplemented poset that is simultaneously a lattice.
An orthocomplemented poset satisfying that $a\vee b$ exists for every $a,b\in P$ such that $a\le b'$ is called an \emph{orthologic} and
an orthologic satisfying the orthomodular law
\[a\le b\ \Rightarrow\ b=a\vee(a'\wedge b)\]
is called an \emph{orthomodular logic}.

A net $\left(x_\gamma\right)_{\gamma\in\Gamma}$ is said to be increasing  if
$x_{\gamma_1}\le x_{\gamma_2}$ whenever $\gamma_1\leq \gamma_2$.
We shall denote by $x_\gamma\uparrow$ when $\left(x_\gamma\right)_{\gamma\in\Gamma}$ is increasing and if, in addition,
the supremum of $\left(x_\gamma\right)_{\gamma\in\Gamma}$ equals $x$, we shall write $x_\gamma\uparrow x$.
Dual symbols shall be used for decreasing nets.

A net $(x_\gamma)$ in $P$ is said to \emph{o-converge} to $x\in P$ if there are nets $(a_\gamma)$ and $(b_\gamma)$ in $P$
satisfying $a_\gamma\le x_\gamma\le b_\gamma$ for every $\gamma$ such that $a_\gamma\uparrow x$ and $b_\gamma\downarrow x$.
In this case we write $x_\gamma\oconverges x$ in $(P,\le)$.  Clearly, if $x_\gamma\uparrow x$ or $x_\gamma\downarrow x$ in $(P,\le)$,
then $x_\gamma\oconverges x$.  It is easy to see also that  the order limit of an o-convergent net is uniquely determined.  Note that
if $(P,\le)$ is Dedekind
complete, a net $(x_\gamma)_{\gamma\in\Gamma}$ order converges to $x$ in $(P,\le)$ if and only if
$\limsup_\gamma x_\gamma=\liminf_\gamma x_\gamma=x$.

A subset $X$ of $P$ is called \emph{o-closed} if no net  in $X$ o-converges to a point outside of $X$. The collection of all
o-closed sets comprises the closed sets for the \emph{order
topology} $\tau_o(P)$  of $P$.  The order topology of $P$ is the finest topology on $P$
that preserves o-convergence of nets; i.e. if $\tau$ is a topology on $P$ such that $x_\gamma\xrightarrow{o} x$ in $P$
implies $x_\gamma\xrightarrow{\tau} x$, then $\tau\subseteq \tau_o(P)$. Since  every o-convergent net is eventually  bounded, in the definition of o-closed
 sets it is enough to consider  bounded nets,
i.e.  a subset $X$ of $P$ is o-closed if and only if $X\cap[a,b]$ is o-closed for every $a,b\in P$.
Note that $\tau_o(P)$ is $T_1$ but in
general is not Hausdorff
\cite{Fl,FlKl}.

We recall that the \emph{interval topology} in  a poset $(P,\le)$ is the coarsest topology on $P$
such that every interval of the
type $[x,\rightarrow]$ or $[\leftarrow,x]$ is closed.  It is known that a bounded lattice is compact in its interval
topology if and only if  it
is Dedekind complete \cite[Theorem 20]{Birkhoff2}.

\begin{prop}\label{1.0} Let $(P,\le)$ be a poset.
\begin{enumerate}[{\rm(i)}]
\item The order topology on  $P$ is finer than the interval topology.
\item For  a monotonic net  $(x_\gamma)$ in $P$ and $x\in P$ the following conditions are equivalent:
\begin{enumerate}[{\rm(a)}]
\item either $ x_\gamma\uparrow x$ (when $x_\gamma\uparrow$) or $x_\gamma\downarrow x$ (when $x_\gamma\downarrow$).
\item  $x_\gamma\oconverges x$ in $P$.
\item $(x_\gamma)$ converges to $x$ w.r.t. $\tau_o(P)$.
\item $(x_\gamma)$ converges to $x$ w.r.t. the interval topology.
\end{enumerate}
\end{enumerate}
\end{prop}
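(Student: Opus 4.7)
The plan is to establish (i) first, then chase the equivalences in (ii) via the cycle (a)$\Rightarrow$(b)$\Rightarrow$(c)$\Rightarrow$(d)$\Rightarrow$(a); part (i) will feed directly into the step (c)$\Rightarrow$(d), and the only nontrivial implication will be (d)$\Rightarrow$(a).

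For (i), the interval topology is by definition generated by the closed subbase $\{[a,\rightarrow]:a\in P\}\cup\{[\leftarrow,a]:a\in P\}$, so it suffices to verify that every such subbasic set is o-closed. Let $(x_\gamma)$ be a net in $[a,\rightarrow]$ with $x_\gamma\oconverges x$, witnessed by nets $a_\gamma\uparrow x$ and $b_\gamma\downarrow x$ satisfying $a_\gamma\le x_\gamma\le b_\gamma$. Then $a\le x_\gamma\le b_\gamma$ for all $\gamma$, so $a$ is a lower bound for $\{b_\gamma\}$, whose infimum is $x$; hence $a\le x$ and $x\in[a,\rightarrow]$. The argument for $[\leftarrow,a]$ is completely dual.

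For (ii), the implication (a)$\Rightarrow$(b) is immediate on taking the constant net $(x)$ as the appropriate upper or lower envelope, and (b)$\Rightarrow$(c) holds because $\tau_o(P)$ is the finest topology on $P$ preserving o-convergence, as recalled just before the proposition. The implication (c)$\Rightarrow$(d) follows at once from (i): convergence in the finer topology implies convergence in the coarser one. The substantive step is (d)$\Rightarrow$(a). Assume without loss of generality that $x_\gamma\uparrow$ and $(x_\gamma)$ converges to $x$ in the interval topology. For each fixed $\gamma_0$, monotonicity places the net eventually in $[x_{\gamma_0},\rightarrow]$, which is closed in the interval topology; the general fact that a net converging to a point and eventually lying in a closed set has its limit in that closed set (its complement would otherwise be a neighborhood of the limit) yields $x_{\gamma_0}\le x$. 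Hence $x$ is an upper bound. If $y$ is any other upper bound, then $(x_\gamma)$ lies entirely in the interval-closed set $[\leftarrow,y]$, so the same reasoning gives $x\le y$. Therefore $x=\bigvee_\gamma x_\gamma$, i.e.\ $x_\gamma\uparrow x$.

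I do not anticipate a serious obstacle: the whole argument reduces to the closedness of the principal intervals in both topologies (trivially in the interval topology, and by the short net-chasing argument above in the order topology) together with the elementary topological fact that the limit of an eventually-in-a-closed-set net lies in that closed set. The only point demanding any care is the tacit use that $\tau_o(P)$ itself preserves o-convergence in (b)$\Rightarrow$(c); this is the content of the characterization of $\tau_o(P)$ as the finest such topology given in the text, so it may be invoked without further argument.
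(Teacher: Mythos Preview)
Your proof is correct and follows essentially the same route as the paper: you show the subbasic interval-closed sets are o-closed by comparing with the upper envelope $b_\gamma\downarrow x$, and then close the cycle (a)$\Rightarrow$(b)$\Rightarrow$(c)$\Rightarrow$(d)$\Rightarrow$(a) using exactly the paper's argument for (d)$\Rightarrow$(a), namely that the net is eventually in $[x_{\gamma_0},\rightarrow]$ and always in $[\leftarrow,y]$ for any upper bound $y$. The only cosmetic difference is that the paper declares (a)$\Rightarrow$(b)$\Rightarrow$(c)$\Rightarrow$(d) ``obvious'' in one breath, whereas you spell out each implication.
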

\begin{proof}
{\rm(i)}~Consider e.g. a net $(x_\gamma)$ in $[a,\rightarrow]$ o-converging to $x$. Let $y_\gamma\leq x_\gamma\leq z_\gamma$ for all
$\gamma$ and $y_\gamma\uparrow x$, $z_\gamma\downarrow x$. Then $a\leq x_\gamma\leq z_\gamma\downarrow x$ implies $a\leq x$. Hence
$[a,\rightarrow]$ is o-closed.

{\rm(ii)}~Suppose that $x_\gamma\uparrow$.  $({\rm(a)}\Rightarrow{\rm(b)}\Rightarrow {\rm(c)}\Rightarrow {\rm(d)})$ is obvious.
$({\rm(d)}\Rightarrow {\rm(a)})$:  For any $\gamma_0\in\Gamma$ the
net $(x_\gamma)_{\gamma\geq\gamma_0}$ belongs to the closed
interval $[x_{\gamma_0},\rightarrow]$ and converges to $x$ w.r.t. the interval topology, hence $x\geq x_{\gamma_0}$. It follows that $x$ is an upper
bound of $\{x_\gamma:\gamma\in\Gamma\}$.  On the other hand, if $y$ is an
upper bound of $\{x_\gamma:\gamma\in\Gamma\}$, then $x\in[\leftarrow,y]$ since $[\leftarrow,y]$ is closed w.r.t. the interval topology. This
shows that $x=\bigvee_\gamma x_\gamma$.  The case when $x_\gamma\downarrow$ follows by duality.
\end{proof}




In general, if $P_0$ is a subset of a partially ordered set $P$, the topologies $\tau_o(P_0)$ and $\tau_o(P)\vert_{P_0}$ are not necessarily comparable.
This can elementarily be seen from the next example
\footnote{By taking the `sum' of these spaces, one gets an example for $\tau_o(P)|_{P_0}\nsubseteq \tau_o(P_0)\nsubseteq \tau_o(P)|_{P_0}$.}.

\begin{exmp} (i) $P=[0,1]\subseteq \R$, $P_0=\left\lbrace\frac{1}{2}-\frac{1}{n}:n\ge 2\right\rbrace\cup\lbrace 1\rbrace$. Then
$\tau_o(P)\vert_{P_0}$ is discrete and $\tau_o(P)\vert_{P_0}\nsubseteq\tau_o(P_0)$.

(ii) $P=\mathcal{P}(\mathds{N})$, $P_0=\lbrace A:\vert A\vert<\infty\rbrace$. Then $\tau_o(P_0)$ is discrete and
$\tau_o(P)\vert_{P_0}\nsupseteq\tau_o(P_0)$.
\end{exmp}

We shall, however, make use of the following observations.  We include the proofs for the sake of completeness.

\begin{prop}\label{1.6} Let $(P,\leq)$ be a partially ordered set and let $P_0$  be a subset of $P$.
\begin{enumerate}[{\rm(i)}]
\item If every increasing net in $P_0$ having a supremum in $P_0$ has the same supremum in $P$, and every decreasing net in $P_0$ having an infimum
in $P_0$ has the same infimum in $P$, then $\tau_o(P)|_{P_0}\subseteq \tau_o(P_0)$.
\item If $(P,\le)$ is Dedekind complete and $P_0$ is an o-closed sublattice
of $P$ then $\tau_o(P)|P_0= \tau_o(P_0)$.
\end{enumerate}
\end{prop}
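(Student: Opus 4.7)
The plan is to prove (i) by lifting o-convergence from $P_0$ to $P$, to deduce the forward inclusion of (ii) directly from (i), and to establish the reverse inclusion by constructing $\liminf$/$\limsup$ witness nets inside $P_0$.

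For (i) I would dualize to closed sets. Given a $\tau_o(P)$-closed set $C \subseteq P$ and a net $(x_\gamma) \subseteq C \cap P_0$ with $x_\gamma \oconverges x$ in $P_0$, the definition provides $a_\gamma \le x_\gamma \le b_\gamma$ with $a_\gamma \uparrow x$ and $b_\gamma \downarrow x$ in $P_0$. The hypothesis lifts both monotone convergences to $P$, hence $x_\gamma \oconverges x$ in $P$, so $x \in C$; combined with $x \in P_0$ this shows $C \cap P_0$ is $\tau_o(P_0)$-closed.

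For the forward inclusion in (ii), I would verify that the hypothesis of (i) is automatic: if $(a_\gamma)$ increases in $P_0$ with supremum $a$ in $P_0$, Dedekind completeness of $P$ yields a supremum $\bar a$ in $P$; then $a_\gamma \uparrow \bar a$ in $P$ forces $\bar a \in P_0$ by o-closedness, and comparing suprema in $P_0$ and $P$ gives $a = \bar a$; dually for infima. For the reverse inclusion, I would prove that every $\tau_o(P_0)$-closed set $D \subseteq P_0$ is actually $\tau_o(P)$-closed, so that $D = D \cap P_0$ has the right form. Take $(x_\gamma) \subseteq D$ with $x_\gamma \oconverges x$ in $P$; o-closedness of $P_0$ gives $x \in P_0$. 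Define $y_\gamma := \inf_{\delta \ge \gamma} x_\delta$ and $z_\gamma := \sup_{\delta \ge \gamma} x_\delta$ in $P$; Dedekind completeness together with the $\liminf/\limsup$ characterization of o-convergence recalled in the preliminaries gives $y_\gamma \uparrow x$ and $z_\gamma \downarrow x$ in $P$, with $y_\gamma \le x_\gamma \le z_\gamma$. The sublattice hypothesis places all finite meets (resp.\ joins) of $\{x_\delta : \delta \ge \gamma\}$ in $P_0$, and these form monotone nets o-converging in $P$ to $y_\gamma$ and $z_\gamma$; o-closedness of $P_0$ therefore forces $y_\gamma, z_\gamma \in P_0$. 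Since $x \in P_0$ is an upper bound of $(y_\gamma)$ and any upper bound in $P_0$ is one in $P$, we get $y_\gamma \uparrow x$ in $P_0$; dually $z_\gamma \downarrow x$ in $P_0$. Therefore $x_\gamma \oconverges x$ in $P_0$, whence $x \in D$.

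The main obstacle is establishing $y_\gamma, z_\gamma \in P_0$ in the reverse inclusion of (ii): the sublattice assumption supplies only \emph{finite} lattice operations, and it is precisely the o-closedness of $P_0$ in $P$ that upgrades these finite meets and joins to the infinite $\inf$ and $\sup$ that must live inside $P_0$ for the witness-net construction to succeed.
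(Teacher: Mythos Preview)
Your proposal is correct and follows essentially the same approach as the paper. For (i) you argue by closed sets exactly as the paper does (the paper phrases it contrapositively, but the content is identical). For (ii) the paper's proof is a one-line assertion that ``it does not matter whether we take $\liminf$, $\limsup$ in $P$ or $P_0$''; your argument unpacks precisely this, using the sublattice property to get finite meets/joins in $P_0$ and o-closedness to pass to the infinite ones---which is exactly what lies behind the paper's sentence.
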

\begin{proof} {\rm(i)} If a subset $X\subseteq P_0$ is not closed w.r.t. $\tau_o(P_0)$ one can find nets $(a_\gamma)$, $(b_\gamma)$ and
$(x_\gamma)$ in $P_0$ such that $x_\gamma\in X\cap [a_\gamma,b_\gamma]$ for every $\gamma$, $a_\gamma\uparrow$, $b_\gamma\downarrow$
and $\bigvee_\gamma a_\gamma=\bigwedge b_\gamma$ belongs not to $X$, where the supremum and infimum are here taken in $P_0$.
So the hypothesis precisely asserts that $(x_\gamma)$ is o-convergent to $x$ in the larger poset $P$, i.e. $X$ is not closed w.r.t. the subspace
topology $\tau_o(P)|_{P_0}$.  Hence $\tau_o(P)|_{P_0}\subseteq \tau_o(P_0)$.
{\rm(ii)} A net $(x_\gamma)$ in a Dedekind complete poset is o-convergent if and only if $\liminf_\gamma x_\gamma=\limsup_\gamma x_\gamma$.
Since $P_0$ is assumed to be an o-closed sublattice of $P$ it does not matter whether we take $\liminf$, $\limsup$ in $P$ or $P_0$ and so
the assertion follows.
\end{proof}

A function  $\varphi$ from a poset $(P,\le)$ into another poset $(Q,\le)$ is said to be:
\begin{enumerate}[{\rm(i)}]
  \item \emph{isotone} if $\varphi(a)\le\varphi(b)$ for every $a,b\in P$ satisfying $a\le b$;
 \item \emph{antitone} if $\varphi(b)\le\varphi(a)$ for every $a,b\in P$ satisfying $a\le b$;
  \item \emph{o-continuous} if $\varphi(x_\gamma)\oconverges \varphi(x)$ whenever $(x_\gamma)$ is a net in $P$ that o-converges to $x$.
\end{enumerate}
The posets $(P,\le)$ and $(Q,\le)$ are said to be \emph{order-isomorphic} (resp. \emph{dual order-isomorphic}) if there exists a bijection $\varphi$
between $P$ and $Q$ such that both $\varphi$ and $\varphi^{-1}$ are isotone (resp. antitone).

\begin{rem}\label{r:3} It is straightforward to verify that for a function $\varphi$ from a poset $(P,\le)$ into another poset $(Q,\le)$ the
following statements hold.
\begin{enumerate}[{\rm(i)}]
\item If $\varphi$ is isotone then $\varphi$ is o-continuous if and only if $f(x_\gamma)\uparrow f(x)$ (and resp. $f(x_\gamma)\downarrow f(x)$) whenever $x_\gamma\uparrow x$ (resp. $x_\gamma\downarrow x$).
\item If $\varphi$ is o-continuous then $\varphi$ is continuous w.r.t. $\tau_o(P)$ and $\tau_o(Q)$.  In particular, by {\rm(i)} and
Proposition \ref{1.0}~{\rm(ii)}, follows that for an isotone function, o-continuity is equivalent to continuity w.r.t. the respective order topologies.
We remark that for this equivalence the assumption on isotonicity of $\varphi$ is not redundant; to see this, one simply needs to take for
$(Q,\le)$ a poset admitting an unbounded (and therefore not o-convergent) sequence say $(y_n)$  that converges w.r.t. $\tau_o(Q)$ to some
point $y_\infty\in Q$, and then consider the function  $\varphi:\N\cup\{\infty\}\to Q$ defined by $\varphi(t):=y_t$.
\end{enumerate}
\end{rem}



The order topology on the projection lattice of a Hilbert space was studied in \cite{Pa} and more recently by the authors in \cite{BCW}.
A systematic treatment of the order topology associated to various structures of a von Neumann algebra was carried
out in \cite{ChHaWe}.
For example, it was shown that the order topology -- albeit far from being a linear topology --
on bounded subsets of  the self-adjoint part of a von Neumann algebra, coincides  with the strong operator topology.
Finite von Neumann algebras were also characterized by the order topological properties of their projection lattice.
The study of  the order topology induced by the star order on operator algebras was initiated in a very recent interesting work \cite{Bo}.

\subsection{Pre-Hilbert spaces}
Let $S$ be a pre-Hilbert space with inner product $\langle\cdot,\cdot\rangle$.  For a subset $A\subseteq S$ let
\[A^{\oc S}:=\{x\in S:\langle x,a\rangle=0\ \forall a\in A\}.\]
Then $A^{\oc S}$ is a closed linear subspace of $S$.  It is easily seen that the mapping $A\mapsto A^{\oc S\oc S}$ defines a Mackey closure
operation on $S$ (see, for example, \cite{Pi1}).  We shall call $A^{\oc S\oc S}$ the \emph{orthogonal closure of $A$ in $S$} and we shall say that a linear subspace $M$
of $S$ is \emph{orthogonally closed} if $M=M^{\oc S\oc S}$.

Let $M$ be a linear subspace of $S$.  Denote by $\ld(M)$  the linear or Hamel dimension of $M$.
A set $A$ of orthonormal vectors is said to be a \emph{maximal orthonormal system (MONS) in $M$} if $A\subseteq M$
and $\{0\}=A^{\oc S}\cap M=:A^{\oc M}$.  By Zorn's lemma,  $M$ has at least one MONS and by virtue of the Bessel inequality, one can show that any
two MONSs in $M$ have the same cardinality;  we call this cardinal number the \emph{orthogonal dimension of $M$} and denote it by $\dim M$.
If $A$ is an \emph{orthonormal basis (ONB) of $M$} then  $\dim M$ equals the \emph{topological density} of $M$.  In general, however, the orthogonal dimension of $M$
is strictly less than its topological density.

Let $\uu(S)$ denote the group of all isometric automorphisms of $S$.  This can be identified with the restriction to $S$ of all
the unitary operators $U$ on the completion $\overline S$ such that both $U$ and $U^\ast$ leave $S$ invariant.

\begin{prop}\label{unitary}
Let $\{x_1,\dots,x_n\}$ and $\{y_1,\dots,y_n\}$ be families of vectors in $S$ satisfying $\langle x_i,x_j\rangle=\langle y_i,y_j\rangle$ for every
$i,j\le n$.  Then there exists  $U\in \uu(S)$ such that $Ux_i=y_i$ for every $i\leq n$ and such that $U$ equals the identity operator on
$\{x_i,y_i:1\le i\le n\}^{\oc S}$.
\end{prop}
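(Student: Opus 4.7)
The plan is to explicitly construct $U$ by defining it separately on a finite-dimensional subspace containing the given vectors and on its orthogonal complement in $S$, then glueing the two pieces.

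First, let $F := \spn\{x_1,\dots,x_n,y_1,\dots,y_n\}$, a finite-dimensional subspace of $S$, and set $V_1 := \spn\{x_1,\dots,x_n\}$, $V_2 := \spn\{y_1,\dots,y_n\}$. The Gram-matrix equality
\[
\langle x_i,x_j\rangle=\langle y_i,y_j\rangle \qquad (1\le i,j\le n)
\]
implies $\bigl\|\sum c_i x_i\bigr\|^2=\sum_{i,j}\overline{c_i}c_j\langle x_i,x_j\rangle=\sum_{i,j}\overline{c_i}c_j\langle y_i,y_j\rangle=\bigl\|\sum c_i y_i\bigr\|^2$. Hence the assignment $x_i\mapsto y_i$ extends (by linearity) to a well-defined linear isometry $T_0:V_1\to V_2$, and in particular $\dim V_1=\dim V_2$.

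Next I would extend $T_0$ to a surjective linear isometry $U_F:F\to F$. Since $F$ is finite-dimensional, we have the orthogonal decompositions $F=V_1\oplus V_1^{\oc F}=V_2\oplus V_2^{\oc F}$, and because $\dim V_1=\dim V_2$ we also have $\dim V_1^{\oc F}=\dim V_2^{\oc F}$. Pick any linear isometry $T_1:V_1^{\oc F}\to V_2^{\oc F}$ (e.g., by sending an orthonormal basis of the one to an orthonormal basis of the other), and define $U_F$ to be $T_0$ on $V_1$ and $T_1$ on $V_1^{\oc F}$; the result is a surjective linear isometry of $F$ with $U_Fx_i=y_i$.

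Now I need the algebraic decomposition $S=F\oplus F^{\oc S}$: given $v\in S$, take an orthonormal basis $\{e_1,\dots,e_m\}$ of $F$ and set $f:=\sum_{k=1}^m\langle v,e_k\rangle e_k\in F$, $g:=v-f$. A one-line computation gives $\langle g,e_k\rangle=0$ for all $k$, so $g\in F^{\oc S}$, and uniqueness is immediate; note the finite-dimensionality of $F$ is essential here, since it is precisely what guarantees this decomposition exists inside the pre-Hilbert space $S$. Defining $U:=U_F\oplus \id_{F^{\oc S}}$, I obtain a linear bijection of $S$ onto $S$ that preserves inner products (the cross terms $\langle U_F f,g\rangle=0$ because $U_F f\in F$ and $g\in F^{\oc S}$), so $U\in\uu(S)$. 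By construction $Ux_i=y_i$, and since $\{x_i,y_i:1\le i\le n\}^{\oc S}=F^{\oc S}$, the map $U$ is the identity there.

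The only subtlety worth stressing is the passage from the Gram-matrix condition to a \emph{well-defined} linear isometry $T_0$ on $V_1$ when the $x_i$'s are linearly dependent; everything else is an elementary finite-dimensional linear-algebra extension followed by the orthogonal splitting of $S$ induced by the finite-dimensional subspace $F$.
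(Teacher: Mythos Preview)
Your proof is correct and follows essentially the same approach as the paper's: build an isometry from $\spn\{x_i\}$ onto $\spn\{y_i\}$, extend it to a unitary on the finite-dimensional space $F=\spn\{x_i,y_i\}$ via the orthogonal complements inside $F$, and then extend by the identity on $F^{\oc S}$ using that finite-dimensional subspaces split. The only cosmetic difference is that the paper handles well-definedness by first selecting a basis $x_1,\dots,x_r$ of $\spn\{x_i\}$ and then verifying $T(x_j)=y_j$ for $j>r$, whereas you use the Gram-matrix norm identity directly; both arguments are equivalent.
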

\begin{proof}
Let $X=\spn\{x_1,\dots,x_n\}$ and $Y=\spn\{y_1,\dots,y_n\}$. We may assume that $x_1,\dots,x_r$ is a base of $X$.
Let $T:X\rightarrow Y$ be the unique linear map such that $T(x_i)=y_i$ for $i\leq r$. One easily sees that $T$ preserves the
inner product: if $ u=\sum_{i=1}^r\alpha_ix_i$ and $v=\sum_{i=1}^r\beta_ix_i$ where $\alpha_i,\beta_i$ are scalars, then
$$\langle T(u),T(v)\rangle=\sum_{i,j\leq r}\alpha_i\overline{\beta_i}\langle y_i,y_j\rangle
=\sum_{i,j\leq r}\alpha_i\overline{\beta_i}\langle x_i,x_j\rangle=\langle u,v\rangle\,.  $$
Moreover, one has $T(x_j)=y_j$ also for $r<j\leq n$. In fact, for any $i\leq r$ one has
$\langle T(x_j),y_i\rangle=\langle T(x_j),T(x_i)\rangle=\langle x_j,x_i\rangle=\langle y_j,y_i\rangle$,
hence $\langle T(x_j),y\rangle=\langle y_j,y\rangle$ for all $y\in Y$ which implies $T(x_j)=y_j.$ In particular, $T$ is a linear isomorphism from
$X$ onto $T(X)=Y$ and $\ld(Y)=\ld(X)$.

We now extend $T$ to a unitary $U_0$ on $Z:=X+Y$. If $X=Y=Z$, let $U_0=T$.
Otherwise let $e_1,\dots,e_m$ be an ONB of $X^{\perp_Z}$ and $f_1,\dots,f_m$ an ONB of $Y^{\perp_Z}$. Then there is a unique
$U_0\in\uu(Z)$ such that $U_0$ extends $T$ and $U_0(e_i)=f_i$ for all $i\leq m$.

Finally, since $Z$ is finite dimensional and therefore  a splitting subspace of $S$, there is a unitary $U\in\uu(S)$ which extends $U_0$ and its
restriction to $Z^{\perp_S}$ is the identity operator.
\end{proof}


\section{Pre-Hilbert space logics}

In what follows $S$ is always a pre-Hilbert space.  For a non-zero vector $u\in S$ we write $[u]$ for the
one-dimensional linear subspace spanned by the vector $u$.
For an arbitrary subset $X$ of $S$ we denote by $\ff(X)$  the
upward directed family all finite dimensional linear subspaces spanned by vectors in $X$ and let $\tilde{\ff}(X):=\{A^{\oc{S}}:A\in\ff(X)\}$.

Motivated by the Hilbert-space model for quantum mechanics, which postulates that events of a quantum system are represented by closed
linear subspaces of a Hilbert space (see, for example, \cite{Va,PtPu})  we make the following definition.

\begin{defn} A \emph{pre-Hilbert space logic} is a pair $(S,\el)$ where $S$ is a pre-Hilbert space and $\el$ is a family of orthogonally-closed linear subspaces of
$S$ satisfying:
\begin{enumerate}[{\rm(i)}]
  \item $\{0\}\in\el$,
  \item if $M\in\el$ then $M^{\oc{S}}\in\el$,
  \item if $M\in \el$ and $u\in S$, then $M+[u]\in\el$.
\end{enumerate}
\end{defn}

The set-theoretic inclusion $\subseteq$ and the orthocomplementation operation $\perp:M\mapsto M^{\oc{S}}$ render a
pre-Hilbert space logic into an orthocomplemented poset with the atoms being the set of one-dimensional
linear subspaces of $S$.

Our definition of a pre-Hilbert space logic is molded on the following  well-studied  examples (see \cite{Dv1, Ha, BCD}) of  pre-Hilbert space logics:
Here we denote by $\mathscr V(S)$ the lattice of all linear subspaces of $S$.

\begin{itemize}
  \item $\f(S):=\{M\in \mathscr V(S):M=M^{\oc{S}\oc{S}}\}$ is the complete ortholattice of orthogonally closed linear subspaces of $S$.
  \item $\e_q(S):=\{M\in \mathscr V(S):M\text{ is closed and $M\oplus M^{\oc{S}}$ is dense in }S\}$ is the orthocomplemented poset of quasi-splitting linear subspaces of $S$.
  \item $\e(S):=\{M\in \mathscr V(S):M\oplus M^{\oc{S}}=S\}$ is the orthomodular poset of splitting linear subspaces of $S$.
  \item $\ci(S):=\{M\in \mathscr V(S): M \text{ is complete}\text{ or }M=N^{\oc{S}} \text{ for some complete $N\subseteq S$}\}$ is the orthomodular poset of complete/co-complete linear subspaces of $S$.
  \item $\p(S):=\{M\in \mathscr V(S):\dim M<\infty\text{ or }M=N^{\oc{S}}\text{ for some }N\subseteq S,\ \dim N<\infty\}$ is the modular lattice of
  finite/co-finite dimensional linear subspaces of $S$.
\end{itemize}

It is easily seen that these pre-Hilbert space logics are symmetric\footnote{
The pre-Hilbert space logic $(S,\el)$ is said to be \emph{symmetric} if  $U\el=\{UA:A\in\el\}\subseteq\el$ for every $U\in\uu(S)$.}
and that
$\p(S)\subseteq\ci(S)\subseteq \e(S)\subseteq \e_q(S)\subseteq \f(S)$.
With the  exception of $\p(S)$,  all these pre-Hilbert space logics are equal when $S$ is a Hilbert space.  In contrast, if, for example,
$S=c_{00}$ then
$$
\ci(S)\subsetneq \e(S)\subsetneq \e_q(S)\subsetneq \f(S).
$$
In fact, we have the following algebraic characterization of Hilbert spaces (see, for example, \cite{BCD}).

\begin{thm}
  For a pre-Hilbert space $S$ the following statements are equivalent:
  \begin{enumerate}[{\rm(i)}]
    \item $S$ is complete,
    \item $\f(S)$ is orthomodular,
    \item $S$ has an orthonormal basis and $\e_q(S)$ is orthomodular,
    \item $\e(S)$ is atomic $\sigma$-complete\footnote{We recall that an
orthocomplemented poset $(P,\le,0,1)$ is atomic $\sigma$-complete if whenever $(a_n)$ is a sequence of pairwise orthogonal atoms in $P$
the supremum $\bigvee_n a_n$ exists.}.
\end{enumerate}
\end{thm}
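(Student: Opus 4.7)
My plan is to prove (i)~$\Rightarrow$~each of (ii), (iii), (iv) as the routine direction, and then close the cycle by three separate contrapositives. If $S$ is complete, the three logics $\f(S)$, $\e_q(S)$, $\e(S)$ all coincide with the usual orthomodular projection lattice of the Hilbert space: orthomodularity is the projection theorem, $b = a \oplus (a^{\oc S} \cap b)$ whenever $a \leq b$ are closed subspaces; atomicity of $\e(S)$ is witnessed by the one-dimensional subspaces; and $\sigma$-completeness of countable orthogonal joins of atoms is simply the fact that $\ell^2$-sums of orthonormal vectors converge. Nothing beyond the splitting $M \oplus M^{\oc S} = S$ for every closed subspace is needed.

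The heart of the theorem is (ii)~$\Rightarrow$~(i), the Amemiya--Araki result. Assume $S$ is incomplete, pick $\xi \in \overline{S} \setminus S$, and aim to exhibit $M \in \f(S)$ for which orthomodularity fails at $a = M \leq S = b$: since $b' \wedge b = \{0\}$, orthomodularity would demand $S = M \vee M^{\oc S}$, and because the join in $\f(S)$ is $(M + M^{\oc S})^{\oc S \oc S}$, the goal is to falsify the orthogonal density of $M + M^{\oc S}$ in $S$. The standard construction is to fix an orthonormal sequence $(e_n) \subseteq S$ with $\xi = \sum \alpha_n e_n$ convergent in $\overline S$ but not in $S$, and a second orthonormal sequence $(f_n) \subseteq S$ perpendicular to $(e_n)$, then to set $M$ to be the orthogonal closure of $\spn\{e_n + \beta_n f_n : n \in \N\}$ for a carefully chosen scalar sequence $(\beta_n)$. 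One arranges that $M^{\oc S}$ is trivial (or at any rate too small to help) while $M$ itself does not exhaust $S$, and this produces the required violation.

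For (iii)~$\Rightarrow$~(i) and (iv)~$\Rightarrow$~(i) I would use variants tailored to the smaller ambient logic. In (iii), the hypothesis that $S$ admits an orthonormal basis allows one to refine the Amemiya--Araki subspace so that it additionally satisfies the quasi-splitting condition required for membership in $\e_q(S)$, while the coefficients $(\alpha_n), (\beta_n)$ still obstruct orthomodularity. For (iv), the atomic $\sigma$-completeness hypothesis ensures that every orthonormal sequence $(e_n) \subseteq S$ has a join $M = \bigvee_n [e_n] \in \e(S)$ which is therefore a splitting subspace of $S$; reducing any Cauchy sequence in $S$ to a Gram--Schmidt form recasts its limit as an $\ell^2$-sum over such an $M$, and the splitting forces that limit to lie in $S$, hence $S$ is complete.

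The main obstacle is the construction underpinning (ii)~$\Rightarrow$~(i). One must verify orthogonal closedness of $M$, compute $M^{\oc S}$ within the pre-Hilbert space $S$ rather than in its completion $\overline S$, and show that $(M + M^{\oc S})^{\oc S \oc S} \neq S$. This bookkeeping is delicate precisely because the Mackey closure in $S$ can differ substantially from norm closure in $\overline S$, so the sequences $(e_n), (f_n)$ and the scalars $(\alpha_n), (\beta_n)$ must be coordinated carefully for the algebraic obstruction to survive passage to the double complement inside $S$.
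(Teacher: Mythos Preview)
The paper does not give its own proof of this theorem: it is quoted, with a reference to \cite{BCD}, as a known algebraic characterization of Hilbert spaces, so there is no in-paper argument to compare your plan against. On its own merits your outline is uneven. The forward direction is fine, and your (ii)$\Rightarrow$(i) sketch correctly points at the Amemiya--Araki mechanism, though you leave the verification unexecuted; your (iii)$\Rightarrow$(i) is little more than a promise to ``refine'' that construction.

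The argument for (iv)$\Rightarrow$(i), however, has a concrete gap. The step ``the splitting forces that limit to lie in $S$'' is false. Take $S=c_{00}$ with the standard unit vectors $(e_n)$ and the Cauchy sequence $x_k=\sum_{n\le k}n^{-1}e_n$. Gram--Schmidt on $(x_k)$ returns $(e_n)$, one has $\{e_n\}^{\oc S}=\{0\}$ and hence $M=\bigvee_n[e_n]=S$, which is trivially splitting, yet $\xi=\sum_n n^{-1}e_n\notin S$. In general $M\in\e(S)$ yields only $M=\overline M\cap S$, not $M=\overline M$, so splitting alone cannot pull an $\ell^2$-limit from $\overline M$ back into $S$. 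The correct contrapositive requires, starting from a given $\xi\in\overline S\setminus S$, a genuine construction of a \emph{particular} orthonormal sequence $(g_n)$ in $S$ for which $\{g_n\}^{\oc S\oc S}$ fails to split; this is where an auxiliary orthonormal system and carefully chosen coefficients actually enter, and it is not obtained by applying Gram--Schmidt to an arbitrary Cauchy sequence.
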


The definition of pre-Hilbert space logic allows us to have a unified approach: we shall study the order topology $\tau_o(\el)$ associated
with a generic pre-Hilbert space logic $(S,\el)$.

\begin{prop}\label{p:4} Let $(S,\el)$ be a pre-Hilbert space logic and let $\mathcal{X}\subseteq \el$.
\begin{enumerate}[{\rm(i)}]
\item Let $\tilde{\mathcal{X}}:=\{A^{\oc{S}}:A\in\mathcal{X}\}$.  Then $\vee \mathcal{X}$ exists in $\el$  if and only if
$\wedge\tilde{\mathcal{X}}$ exists in $\el$ and in this case $\vee \mathcal{X}=(\wedge\tilde{\mathcal{X}})^{\oc S}$.
\item $\wedge \mathcal X$ exists in $\el$  if and only if $\cap\mathcal X\in\el$ and in this case $\wedge\mathcal X=\cap\mathcal X$.
\item $\vee\mathcal X$ exists in $\el$ if and only if $(\cup \mathcal X)^{\oc{S}\oc{S}}\in\el$ and in this case
$\vee \mathcal X=(\cup\mathcal X)^{\oc{S}\oc{S}}$.
\end{enumerate}
\end{prop}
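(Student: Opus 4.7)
The plan is to prove (ii) first, since it contains the real content; then (i) follows from general orthocomplementation theory, and (iii) combines (i) and (ii) with the de Morgan identity for $\oc S$.

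For (ii), I would first note that $\cap \mathcal{X}$ is always an orthogonally closed linear subspace of $S$ (an arbitrary intersection of orthogonally closed subspaces is orthogonally closed, since $\oc S$ is an antitone Galois connection), but of course it need not lie in $\el$. The easy direction is: if $\cap\mathcal{X}\in\el$, it is clearly a lower bound of $\mathcal{X}$ in $\el$, and any $N\in\el$ with $N\subseteq A$ for all $A\in\mathcal{X}$ satisfies $N\subseteq\cap\mathcal{X}$, so it is the infimum. For the converse, assuming $\wedge\mathcal{X}$ exists in $\el$, I would show $\wedge\mathcal{X}=\cap\mathcal{X}$, which will give $\cap\mathcal{X}\in\el$. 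One inclusion is automatic: $\wedge\mathcal{X}\subseteq A$ for every $A\in\mathcal{X}$, so $\wedge\mathcal{X}\subseteq\cap\mathcal{X}$. For the reverse, suppose some $u\in\cap\mathcal{X}\setminus\wedge\mathcal{X}$; then by axiom (iii) in the definition of pre-Hilbert space logic, $\wedge\mathcal{X}+[u]\in\el$, and since $u\in A$ for every $A\in\mathcal{X}$ it is a lower bound of $\mathcal{X}$ in $\el$ strictly above $\wedge\mathcal{X}$, contradicting maximality.

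For (i), I would just invoke the formal fact that on the orthocomplemented poset $(\el,\subseteq,\oc S)$ the map $M\mapsto M^{\oc S}$ is an order-reversing involution, so for any family $\mathcal{X}\subseteq\el$, $\bigvee\mathcal{X}$ exists in $\el$ if and only if $\bigwedge\tilde{\mathcal{X}}$ exists in $\el$, and in that case the de Morgan identity $\bigvee\mathcal{X}=(\bigwedge\tilde{\mathcal{X}})^{\oc S}$ holds; this is a standard one-line verification from the definition of an orthocomplementation.

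For (iii), combine (i) and (ii): $\bigvee\mathcal{X}$ exists in $\el$ iff $\bigwedge\tilde{\mathcal{X}}$ exists in $\el$ iff $\cap\tilde{\mathcal{X}}\in\el$. The key identity, verified directly from the definition of $\oc S$, is
\[\cap\tilde{\mathcal{X}}\;=\;\bigcap_{A\in\mathcal{X}}A^{\oc S}\;=\;(\cup\mathcal{X})^{\oc S},\]
since a vector is orthogonal to every element of every $A\in\mathcal{X}$ precisely when it is orthogonal to $\cup\mathcal{X}$. Because $\el$ is closed under $\oc S$, membership of $(\cup\mathcal{X})^{\oc S}$ in $\el$ is equivalent to membership of $(\cup\mathcal{X})^{\oc S\oc S}$ in $\el$, and then (i) and (ii) together give
\[\bigvee\mathcal{X}=\bigl(\cap\tilde{\mathcal{X}}\bigr)^{\oc S}=(\cup\mathcal{X})^{\oc S\oc S}.\]

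The only nontrivial step is the ``converse'' in (ii), where the perturbation axiom (iii) of a pre-Hilbert space logic is essential — this is exactly what prevents $\wedge\mathcal{X}$ from being properly smaller than $\cap\mathcal{X}$, and it is the one place in the argument that uses more than the orthocomplementation structure.
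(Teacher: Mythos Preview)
Your proposal is correct and follows essentially the same argument as the paper: the paper also derives (i) from the fact that $M\mapsto M^{\oc S}$ is a dual order-automorphism, proves (ii) via the perturbation axiom exactly as you do (if $u\in\cap\mathcal X$ then $M+[u]\in\el$ is a lower bound, forcing $u\in M$), and deduces (iii) from (i), (ii), and the identity $\cap\tilde{\mathcal X}=(\cup\mathcal X)^{\oc S}$. The only cosmetic difference is that you present (ii) first and phrase its converse as a contradiction, whereas the paper treats (i) first and phrases (ii) directly.
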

\begin{proof}
The first assertion follows by the fact that the operation $A\mapsto A^{\oc{S}}$ is a dual order-automorphism of $\el$.  {\rm (ii)}~If
$\cap\mathcal X\in\el$, then obviously $\cap\mathcal X$ is the infimum of $\mathcal{X}$ in $\el$.  If $\wedge\mathcal{X}$ exists in $\el$,
say is equal to $M$, then $M\subseteq \cap\mathcal{X}$.  If $u\in\cap\mathcal{X}$, then $M+[u]\subseteq \cap\mathcal{X}$ and
so $M+[u]\subseteq\wedge\mathcal{X}=M$, hence $u\in M$. It follows that $\cap\mathcal X=M\in\el$.  {\rm (iii)}~Using {\rm (i)}, {\rm (ii)}
and the equality $\cap\tilde{\mathcal{X}}=(\cup\mathcal{X})^{\oc{S}}$ one sees that the following conditions are equivalent:
$\vee\mathcal X$ exists, $\wedge\tilde{\mathcal{X}}$ exists,
$\cap\tilde{\mathcal{X}}\in\el$, $(\cup\mathcal{X})^{\perp_S}\in\el$, $(\cup\mathcal{X})^{\perp_S\perp_S}\in\el$.
Moreover, if $\vee\mathcal{X}$ exists, then
$\vee\mathcal{X}=(\wedge\tilde{\mathcal{X}})^{\oc{S}}=(\cap\tilde{\mathcal{X}})^{\oc{S}}=(\cup\mathcal{X})^{\oc S\oc S}.$


\end{proof}

\begin{lem}\label{l:6} Let $A\in \e(S)$.  Then $B^{\oc{A}\oc{A}}=B^{\oc{S}\oc{S}}$ for every $B\subseteq A$.
\end{lem}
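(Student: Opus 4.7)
The plan is to exploit the splitting decomposition $S = A \oplus A^{\oc S}$ to reduce orthogonality relations in $S$ to the corresponding relations inside $A$.

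The first step is to establish the identity
\[B^{\oc S} = B^{\oc A} \oplus A^{\oc S}\quad\text{for every } B\subseteq A.\]
This is verified by decomposing an arbitrary $x \in S$ as $x = x_A + x_{A^\perp}$ with $x_A\in A$ and $x_{A^\perp}\in A^{\oc S}$, and observing that for any $b\in B\subseteq A$ one has $\lt x,b\rt = \lt x_A,b\rt$. Hence $x\in B^{\oc S}$ if and only if $x_A\in B^{\oc S}\cap A = B^{\oc A}$, which gives the desired direct-sum decomposition.

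The second step is to record the elementary fact that for any splitting subspace $A\in\e(S)$ one has $A^{\oc S\oc S}=A$: if $x = x_A + x_{A^\perp}\in A^{\oc S\oc S}$, then testing against $z=x_{A^\perp}\in A^{\oc S}$ yields $\|x_{A^\perp}\|^2=0$, so $x\in A$.

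The third (and conclusive) step is to compute $B^{\oc S\oc S}$ using the first step. A vector $y\in S$ lies in $(B^{\oc A}\oplus A^{\oc S})^{\oc S}$ if and only if $y\perp B^{\oc A}$ and $y\perp A^{\oc S}$. The latter, together with step two, forces $y\in A$; writing $y\in A$ and combining with $y\perp B^{\oc A}$, one sees this is equivalent to $y\in B^{\oc A \oc A}$, where the outer complement is taken inside the pre-Hilbert space $A$. Conversely, if $y\in B^{\oc A\oc A}\subseteq A$ and $z\in B^{\oc S}$ decomposes as $z_A+z_{A^\perp}$ with $z_A\in B^{\oc A}$, then $\lt y,z\rt = \lt y,z_A\rt + \lt y,z_{A^\perp}\rt = 0$, since $y\in A\perp A^{\oc S}$. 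Both inclusions give $B^{\oc A\oc A}=B^{\oc S\oc S}$.

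There is no genuine obstacle here: once the decomposition $B^{\oc S}=B^{\oc A}\oplus A^{\oc S}$ is secured, everything reduces to bookkeeping inside the orthogonal decomposition $S=A\oplus A^{\oc S}$. The only place where the splitting hypothesis is essential (rather than a mere $M=M^{\oc S\oc S}$ assumption) is in step one, where we need every vector of $S$ to possess a component in $A$.
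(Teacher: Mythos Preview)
Your proof is correct and follows essentially the same route as the paper's: both arguments use the splitting decomposition $S=A\oplus A^{\oc{S}}$ to reduce orthogonality relations in $S$ to those inside $A$, and the computation showing $B^{\oc{A}\oc{A}}\subseteq B^{\oc{S}\oc{S}}$ is literally the same decomposition-and-test argument. The only cosmetic difference is that you first isolate the identity $B^{\oc{S}}=B^{\oc{A}}\oplus A^{\oc{S}}$ and then read off both inclusions from it, whereas the paper treats the two inclusions separately without stating this intermediate identity explicitly.
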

\begin{proof}
It is clear that $B\subseteq A$ implies $B^{\oc{S}\oc{S}}\subseteq A^{\oc{S}\oc{S}}=A$, where the latter equality holds because $\e(S)\subseteq \f(S)$.  In addition, $B^{\oc{A}}=B^{\oc{S}}\cap A\subseteq B^{\oc{S}}$ implies that $B^{\oc{A}\oc{S}}{\supseteq} B^{\oc{S}\oc{S}}$.  So, it follows that $B^{\oc{S}\oc{S}}\subseteq B^{\oc{A}\oc{S}}\cap A=B^{\oc{A}\oc{A}}$.

We show that if $u\in B^{\oc{A}\oc{A}}$ then $u\bot B^{\oc{S}}$.  Let $u\in B^{\oc{A}\oc{A}}$, $v\in B^{\oc{S}}$ and $v=v_1+v_2$ be
the decomposition of $v$ such that $v_1\in A$ and $v_2\in A^{\oc{S}}$.  To prove $u\perp v$ we will show that $u\perp v_1$ and $u\perp v_2$.
We have $v_2\bot u$ because $u\in A$.  Moreover,
$v_2\in A^{\oc{S}}\subseteq B^{\oc{S}}$ and therefore $v_1=v-v_2\in A\cap B^{\oc{S}}=B^{\oc{A}}$.  So, $u\bot v_1$.
\end{proof}

A pre-Hilbert space logic $(U,\el_1)$ is called a \emph{pre-Hilbert space sublogic} of $(S,\el)$ if $U\subseteq S$ and $\el_1\subseteq \el$.  In the following theorem we characterize the splitting subspaces in terms of the subspace order topology.

\begin{thm}\label{t:1} Let $(U,\el_1)$ be a pre-Hilbert space sublogic of $(S,\el)$.  Then  $\tau_o(\el)|_{\el_1}\subseteq\tau_o(\el_1)$ if and only if $U$ is  splitting in $S$.
\end{thm}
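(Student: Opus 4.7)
For the $(\Leftarrow)$ direction I verify the hypotheses of Proposition~\ref{1.6}(i). The infimum clause is automatic from Proposition~\ref{p:4}(ii): any existing infimum in either logic is the set-theoretic intersection, so if a decreasing net in $\el_1$ has an infimum $M\in\el_1$, then $M=\cap_\gamma M_\gamma$ lies in $\el_1\subseteq\el$ and is therefore also the infimum in $\el$. For the supremum clause, suppose $(M_\gamma)\uparrow M$ in $\el_1$; Proposition~\ref{p:4}(iii) applied to $(U,\el_1)$ gives $M=(\cup_\gamma M_\gamma)^{\oc{U}\oc{U}}$, and Lemma~\ref{l:6} (using $U\in\e(S)$) rewrites the right-hand side as $(\cup_\gamma M_\gamma)^{\oc{S}\oc{S}}$; since this equals $M\in\el_1\subseteq\el$, Proposition~\ref{p:4}(iii) applied to $(S,\el)$ gives that $M$ is the supremum in $\el$ as well. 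Both conditions of Proposition~\ref{1.6}(i) being met, $\tau_o(\el)|_{\el_1}\subseteq\tau_o(\el_1)$ follows.

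For the $(\Rightarrow)$ direction I argue contrapositively. Suppose $U$ is not splitting and choose $v\in S\setminus(U+U^{\oc{S}})$; then $v\notin U^{\oc{S}}$, so the linear functional $\phi\colon U\to\K$, $\phi(u):=\langle u,v\rangle$, is non-zero and $K:=\ker\phi=U\cap[v]^{\oc{S}}$ is a proper hyperplane of $U$. I establish two identities: (a)~$K^{\oc{U}}=\{0\}$, hence $K^{\oc{U}\oc{U}}=U$, and (b)~$K^{\oc{S}\oc{S}}=K$. For~(a), if $0\ne u\in U$ satisfies $u\perp K$ then $\phi(u)\ne 0$ (else $u\in K$ and $u\perp u$ forces $u=0$); for every $u'\in U$ the vector $u'-\tfrac{\phi(u')}{\phi(u)}u$ lies in $K$, and pairing with $u$ produces a non-zero scalar $c$ (independent of $u'$) with $\langle u-cv,u'\rangle=0$ for all $u'\in U$, i.e.\ $u-cv\in U^{\oc{S}}$; hence $v=\tfrac{1}{c}u-\tfrac{1}{c}(u-cv)\in U+U^{\oc{S}}$, contradicting the choice of $v$. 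For~(b), the same projection trick applied to $x\in K^{\oc{S}}$ (using a fixed $u_0\in U$ with $\phi(u_0)\ne 0$) yields $K^{\oc{S}}=[v]+U^{\oc{S}}$, whence $K^{\oc{S}\oc{S}}=[v]^{\oc{S}}\cap U^{\oc{S}\oc{S}}=[v]^{\oc{S}}\cap U=K$, where $U=U^{\oc{S}\oc{S}}$ is automatic since $U\in\el\subseteq\f(S)$.

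Finally, let $\mathcal F$ be the upward directed family of finite-dimensional subspaces of $K$; property~(iii) of a pre-Hilbert space logic places $\mathcal F\subseteq\el_1$. By Proposition~\ref{p:4}(iii) applied in $\el_1$ together with~(a), $\sup_{\el_1}\mathcal F=K^{\oc{U}\oc{U}}=U$, so Proposition~\ref{1.0}(ii) yields $\mathcal F\to U$ in $\tau_o(\el_1)$. By~(b) and Proposition~\ref{p:4}(iii), the supremum of $\mathcal F$ in $\el$ either equals $K\ne U$ (if $K\in\el$) or fails to exist; in either case Proposition~\ref{1.0}(ii) rules out $\mathcal F\to U$ in $\tau_o(\el)$, hence in $\tau_o(\el)|_{\el_1}$. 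Since $\tau_o(\el)|_{\el_1}\subseteq\tau_o(\el_1)$ would force every $\tau_o(\el_1)$-limit to also be a $\tau_o(\el)|_{\el_1}$-limit, this contradicts $\mathcal F\to U$ in $\tau_o(\el_1)$. The main obstacle is producing the ``half-dense'' hyperplane $K$: the failure of splitting is used precisely to exhibit a linear functional on $U$ whose Riesz representor sits outside $U+U^{\oc{S}}$, which is exactly what makes $\ker\phi$ orthogonally dense in $U$ while still being orthogonally closed in $S$.
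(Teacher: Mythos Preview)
Your proof is correct. The $(\Leftarrow)$ direction matches the paper's exactly (Proposition~\ref{1.6}(i) via Proposition~\ref{p:4} and Lemma~\ref{l:6}).

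For $(\Rightarrow)$ the overall template is the same---exhibit an increasing net in $\el_1$ whose supremum in $\el_1$ differs from its supremum in $\el$---but the witnesses differ. The paper argues directly rather than contrapositively: assuming $\tau_o(\el)|_{\el_1}\subseteq\tau_o(\el_1)$, it takes an arbitrary MONS $X$ of $U$, notes that $\ff(X)\uparrow U$ in $\el_1$, transfers this convergence through the topology inclusion to $\tau_o(\el)$, and concludes via Propositions~\ref{1.0} and~\ref{p:4} that $X^{\oc{S}\oc{S}}=U$; the splitting of $U$ then follows from the external characterization \cite[Theorem~2.2]{BCD1} (``$U\in\e(S)$ iff every MONS of $U$ has orthogonal closure $U$ in $S$''). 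Your route is self-contained: from $v\in S\setminus(U+U^{\oc{S}})$ you build the hyperplane $K=U\cap[v]^{\oc{S}}$ and verify by hand that $K^{\oc{U}\oc{U}}=U$ while $K^{\oc{S}\oc{S}}=K$, so that $\ff(K)$ serves as the witness without invoking the MONS criterion. The trade-off is length for independence: the paper's argument is two lines once the cited result is available, whereas yours effectively proves the needed instance of that result from scratch.
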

\begin{proof}  ($\Rightarrow$) In view of  \cite[Theorem
  2.2]{BCD1} it suffices to show that every MONS $X$ of $U$
  satisfies $X^{\oc{S}\oc{S}}=U$. Let $X$ be a MONS of $U$. The upward directed family $\ff(X)$
  has a supremum equal to $U$ in $\el_1$ because
  $X^{\oc{U}\oc{U}}=U$.  So, $\ff(X)$ converges to $U$ w.r.t. $\tau_o(\el_1)$.  By hypothesis, it follows that
  $\ff(X)$ converges to $U$ w.r.t. $\tau_o(\el)$.   Propositions \ref{1.0}  and \ref{p:4} imply that
  $U=(\cup\{F:F\in\ff(X)\})^{\oc{S}\oc{S}}=X^{\oc{S}\oc{S}}$.

($\Leftarrow$)  By Proposition \ref{1.6} it suffices to show that if $A_\gamma\uparrow A$ and $B_\gamma\downarrow B$ in $\el_1$ then
$A_\gamma\uparrow A$ and $B_\gamma\downarrow B$ in $\el$. Let $A_\gamma\uparrow A$ in $\el_1$. Then by Proposition \ref{p:4} and Lemma \ref{l:6}
\[A=\left(\bigcup_{\gamma}A_\gamma\right)^{\perp_U\perp_U}=\left(\bigcup_{\gamma}A_\gamma\right)^{\perp_S\perp_S}, \]
and so $A_\gamma\uparrow A$ also in $\el$.

If  $B_\gamma\downarrow B$ in $\el_1$, then by
Proposition \ref{p:4}, $\bigcap_{\gamma}B_\gamma=B$  and so $B_\gamma\downarrow B$ in $\el$.

\end{proof}

\begin{cor}\label{c:5}
Let $\el_1$ and $\el_2$ be two pre-Hilbert space logics associated with $S$ such that $\el_1\subseteq \el_2$.  Then $\tau_o(\el_2)|_{\el_1}\subseteq \tau_o(\el_1)$.
\end{cor}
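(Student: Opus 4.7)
The corollary looks like a direct specialization of Theorem~\ref{t:1}. The idea is to regard $(S,\el_1)$ as a pre-Hilbert space sublogic of $(S,\el_2)$, with the ambient pre-Hilbert space $U$ of the sublogic taken to be $S$ itself.

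First I would check that $(S,\el_1)$ is legitimately a pre-Hilbert space sublogic of $(S,\el_2)$ in the sense introduced just before Theorem~\ref{t:1}. This requires $S\subseteq S$ (trivial) and $\el_1\subseteq\el_2$ (given by hypothesis); the fact that both $(S,\el_1)$ and $(S,\el_2)$ are themselves pre-Hilbert space logics is part of the assumptions.

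Next I would observe that $U:=S$ is splitting in $S$: indeed $S^{\oc S}=\{0\}$, whence $S\oplus S^{\oc S}=S$, showing $S\in\e(S)$. Hence the ``splitting'' hypothesis in Theorem~\ref{t:1} is trivially satisfied.

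Applying Theorem~\ref{t:1} to the sublogic $(S,\el_1)$ of $(S,\el_2)$ then yields $\tau_o(\el_2)|_{\el_1}\subseteq \tau_o(\el_1)$, which is precisely the conclusion. Since the only nontrivial ingredient, Theorem~\ref{t:1}, is already in hand, I do not anticipate any technical obstacle; the whole content of the corollary is the observation that the ambient pre-Hilbert space is always splitting in itself, so the equivalence of Theorem~\ref{t:1} collapses into a single implication.
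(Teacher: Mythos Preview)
Your proposal is correct and matches the paper's proof exactly: the paper simply notes that the corollary follows from Theorem~\ref{t:1} because $S\in\e(S)$, which is precisely your observation that $S$ is splitting in itself.
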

\begin{proof}
This follows by the theorem because $S\in \e(S)$.
\end{proof}



By definition, a pre-Hilbert space logic is closed w.r.t. finite dimensional perturbations.  The following proposition asserts that finite dimensional perturbations  are o-continuous.  This will be needed in the proof of the main theorem.

\begin{prop}\label{p:1}
Let $(S,\el)$ be a pre-Hilbert space logic.  The mapping $T_F:\el\to\el\ :\ A\mapsto A+F$ is o-continuous for every finite dimensional linear subspace $F$ of $S$.
\end{prop}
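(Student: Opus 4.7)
The plan is to reduce via Remark \ref{r:3}(i). Since $T_F$ is clearly isotone and takes values in $\el$ (by iterating axiom (iii) over a basis of the finite dimensional $F$), o-continuity of $T_F$ is equivalent to the two monotone preservation statements
\begin{enumerate}[{\rm(a)}]
\item $A_\gamma\uparrow A$ in $\el\ \Rightarrow\ A_\gamma+F\uparrow A+F$ in $\el$,
\item $A_\gamma\downarrow A$ in $\el\ \Rightarrow\ A_\gamma+F\downarrow A+F$ in $\el$.
\end{enumerate}

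Part (a) is the painless half. The net $(A_\gamma+F)$ is increasing with upper bound $A+F\in\el$. If $B\in\el$ is any other upper bound, then $A_\gamma\subseteq B$ for every $\gamma$ and $F\subseteq B$, hence $A=\bigvee_\gamma A_\gamma\subseteq B$ and consequently $A+F\subseteq B$; thus $A+F=\bigvee_\gamma(A_\gamma+F)$, which is what is required.

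Part (b) is the substance of the proposition, and I would handle it by induction on $\dim F$. By Proposition \ref{p:4}(ii), the claim is equivalent to the set identity $\bigcap_\gamma(A_\gamma+F)=A+F$, where $A=\bigcap_\gamma A_\gamma$; only the inclusion $\subseteq$ requires work. The case $\dim F=0$ is trivial. In the base case $F=[u]$ with $u\neq 0$ there are two subcases: if $u\in A$ the identity collapses because $A_\gamma+[u]=A_\gamma$ and $A+[u]=A$; otherwise $u\notin A_{\gamma_0}$ for some $\gamma_0$, and since $(A_\gamma)$ is decreasing one gets $A_\gamma\cap[u]=\{0\}$ for every $\gamma\geq\gamma_0$, so the decomposition $x=a_\gamma+\lambda_\gamma u$ with $a_\gamma\in A_\gamma$ is unique from $\gamma_0$ on. Comparing two such decompositions at indices $\gamma_1,\gamma_2\geq\gamma_0$ via a common upper index $\gamma$ (noting $a_\gamma\in A_\gamma\subseteq A_{\gamma_1}$) forces $\lambda_\gamma$ and $a_\gamma$ to stabilise, and the eventual value $a$ lies in $\bigcap_{\gamma\geq\gamma_0}A_\gamma=A$, giving $x\in A+[u]$. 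For the inductive step $\dim F\geq 2$, I would decompose $F=G+[u]$ with $\dim G=\dim F-1$, invoke the induction hypothesis on $G$ to obtain $A_\gamma+G\downarrow A+G$ in $\el$, and then apply the base case to this new decreasing net together with $u$.

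The key obstacle is precisely the eventual uniqueness of the $F$-component exploited in the base case; finite dimensionality of $F$ is indispensable here, since in general $F\cap A_\gamma$ could remain nontrivial for every $\gamma$ without ever stabilising, and the componentwise argument would then break down.
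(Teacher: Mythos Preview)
Your proof is correct and follows essentially the same route as the paper: reduce via Remark~\ref{r:3}(i) to the two monotone cases, dispatch the increasing case by the obvious upper-bound argument, and for the decreasing case reduce by induction to $F=[u]$, where the uniqueness of the decomposition $x=a_\gamma+\lambda_\gamma u$ (once $u\notin A_\gamma$) forces the $A_\gamma$-component to stabilise and land in $\bigcap_\gamma A_\gamma=A$. The only cosmetic difference is that the paper performs the induction to $F=[u]$ up front (using that a composition of o-continuous maps is o-continuous) and then treats both monotone cases for $[u]$, whereas you handle the increasing case for general $F$ directly and run the induction only in part~(b); the content is identical.
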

\begin{proof}
It is enough to prove the proposition in the case when $F=[u]$ for some $u\in S$.  The proof then follows by a straightforward inductive argument.
By Remark \ref{r:3} it suffices to show that if $A_\gamma\uparrow A$ and $B_\gamma\downarrow B$ in $\el$ then $A_\gamma+[u]\uparrow A+[u]$ and
 $B_\gamma+[u]\downarrow B+[u]$.  Clearly, $A+[u]\supseteq A_\gamma + [u]$ for every $\gamma$ and if
 $M\in \el$ satisfies $M\supseteq A_\gamma+[u]\supseteq A_\gamma$ for every $\gamma$, then $M\supseteq A$.
 Since, obviously, $M\supseteq [u]$, it follows that $M\supseteq A+[u]$.  For the second assertion we can suppose that eventually $u\notin B_\gamma$,
 i.e. we can in fact suppose that $u\notin B_\gamma$ for every $\gamma$.  By Proposition \ref{p:4} we have
 $ B=\bigcap_\gamma B_\gamma$, and it is enough to show that
 $ B+[u]=\bigcap_\gamma (B_\gamma+[u])$. The inclusion $\subseteq$ is obvious. On the other-hand, if $x\in B_\gamma+[u]$ for every
  $\gamma$, then for $\gamma'\ge\gamma$ there are $b_\gamma\in B_\gamma$, $b_{\gamma'}\in B_{\gamma'}$ and scalars $\lambda_{\gamma}$,
   $\lambda_{\gamma'}$ such that $x=b_\gamma+\lambda_\gamma u=b_{\gamma'}+\lambda_{\gamma'} u$, hence
   $b_\gamma-b_{\gamma'}=(\lambda_{\gamma'}-\lambda_{\gamma})u$ and so, since $u\notin B_{\gamma}$ we get that $b_\gamma=b_{\gamma'}$ and
   $\lambda_{\gamma}=\lambda_{\gamma'}$. Thus there exists a scalar $\lambda$ such that $x-\lambda u\in \bigcap_\gamma B_\gamma$. This shows,
   using Proposition \ref{p:4}, that $x\in \left(\bigcap_\gamma B_\gamma\right)+[u]=B+[u]$..
\end{proof}

\begin{rem}\label{r:4}
 If in Proposition \ref{p:1}, $F$ has infinite dimension, then $T_F$ is in general not anymore o-continuous:
 let $e_1,e_2, \dots$ be an orthonormal base of $\ell_2$ and $F:=[e]^\perp$ where $e=\sum_{n=1}^\infty\frac{1}{n}e_n$.
 Then $A_n:=\{e_1,\dots,e_n\}^\perp\downarrow 0$, but $A_n+F=\ell_2$; hence $A_n+F\downarrow F$ is not true.
\end{rem}

\begin{prop}\label{p:10}
Let $(S,\el)$ be a pre-Hilbert space logic.  If  $X$
is a dense subset of $S$ then $\ff(X)$ is dense in $\el$ w.r.t. $\tau_o(\el)$.
\end{prop}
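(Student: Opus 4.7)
My plan is to show that every $M\in\el$ lies in $\overline{\ff(X)}^{\tau_o(\el)}$ by constructing, for each such $M$, a net in $\ff(X)$ that $o$-converges to $M$ in $\el$. The base case $M=S$ follows directly from Propositions~\ref{p:4}(iii) and \ref{1.0}(ii): $\ff(X)$ is upward directed in $\el$ (since $F_1+F_2\in\ff(X)$ whenever $F_1,F_2\in\ff(X)$), and its supremum in $\el$ equals $\bigl(\bigcup\ff(X)\bigr)^{\oc{S}\oc{S}}=(\spn X)^{\oc{S}\oc{S}}$ by Proposition~\ref{p:4}(iii). Density of $X$ in $S$ forces $\spn X$ to be norm-dense, so $(\spn X)^{\oc{S}}=\{0\}$ and the supremum is $S$; hence $\ff(X)\uparrow S$ and $\ff(X)\to S$ in $\tau_o(\el)$.

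For general $M\in\el$, I would assemble the bracketing from two parallel applications of the same pattern. The lower bracketing $a_\gamma\uparrow M$ comes from the directed family of finite-dimensional subspaces of $M$, each of which is in $\el$ (by iterating condition~(iii) of the logic axioms from $\{0\}$) and whose supremum in $\el$ equals $M^{\oc{S}\oc{S}}=M$. The upper bracketing $b_\gamma\downarrow M$ comes dually: by Proposition~\ref{p:4}(i) it is equivalent to producing an increasing net converging to $M^{\oc{S}}$ and then orthocomplementing, which one does using the analogous family of finite-dimensional subspaces of $M^{\oc{S}}$. The approximating element of $\ff(X)$ would be indexed by pairs $(G,H)$ with $G\subseteq M$ and $H\subseteq M^{\oc{S}}$ finite-dimensional, taking $F_{(G,H)}\in\ff(X)$ spanned (via $X$-approximants guaranteed by density) by vectors close to bases of $G$ and $H$; the $o$-continuity of the finite-dimensional perturbation maps $T_F$ (Proposition~\ref{p:1}) is the key tool to convert the resulting norm-style approximation into honest bracketing in $\el$.

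The main obstacle will be synchronising both bracketing nets around a single net $F_\gamma\in\ff(X)$ so that $a_\gamma\subseteq F_\gamma\subseteq b_\gamma$ holds for every $\gamma$ while $a_\gamma\uparrow M$ and $b_\gamma\downarrow M$ simultaneously in $\el$. Producing $b_\gamma\downarrow M$ that dominates $F_\gamma$ is the most delicate piece, because perturbations by infinite-dimensional subspaces are in general not $o$-continuous (Remark~\ref{r:4}), so the naive choice $b_\gamma=F_\gamma+M$ need not decrease to $M$. Proposition~\ref{p:4}(i) together with Proposition~\ref{p:1} applied on the dual side through $\tilde{\ff}(X)=\{F^{\oc{S}}:F\in\ff(X)\}$ is the expected tool to force the required decreasing bracketing into $\el$ and complete the $o$-convergence argument.
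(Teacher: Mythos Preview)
Your plan has a genuine gap: it is in general \emph{impossible} to find, for every $M\in\el$, a net in $\ff(X)$ that $o$-converges to $M$. Take any $v\in S\setminus\spn X$ (such $v$ exists whenever $\spn X\neq S$, e.g.\ whenever $X$ is countable and $S$ is infinite dimensional) and set $M=[v]\in\el$. If $(F_\gamma)$ is a net in $\ff(X)$ with $a_\gamma\subseteq F_\gamma$ and $a_\gamma\uparrow[v]$, then each $a_\gamma\subseteq[v]$ and $a_\gamma\subseteq F_\gamma\subseteq\spn X$, hence $a_\gamma\subseteq[v]\cap\spn X=\{0\}$, contradicting $\bigvee_\gamma a_\gamma=[v]$. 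So no net in $\ff(X)$ can $o$-converge to $[v]$. The same obstruction applies to any $M$ for which $(M\cap\spn X)^{\oc S\oc S}\subsetneq M$; your ``lower bracketing from $\ff(M)$'' only works when $\spn X$ meets $M$ densely, and that is simply false for general $M$. Proposition~\ref{p:1} does not help here: the issue is not continuity of a perturbation map but the non-existence of any element of $\ff(X)$ sitting above a nontrivial piece of $M$.

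The paper's argument circumvents this by a two-step density (transitivity of $\tau_o$-closure). First, every $A\in\el$ is an $o$-limit of the decreasing net $\tilde\ff(A^{\oc S})\subseteq\tilde\ff(S)$, so $\tilde\ff(S)$ is $\tau_o$-dense in $\el$; this is essentially your ``upper bracketing'' observation. Second --- and this is the step you are missing --- each $N\in\tilde\ff(S)$ is co-finite-dimensional, and for such $N$ one invokes Lemma~\ref{p:9} to conclude that $N\cap\spn X$ is norm-dense in $N$, whence $\ff(N\cap\spn X)\uparrow N$; thus $\ff(X)$ is $\tau_o$-dense in $\tilde\ff(S)$. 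The crucial point is that Lemma~\ref{p:9} guarantees $\spn X$ meets $N$ densely \emph{only} when $N$ has finite codimension; this is exactly why the intermediate class $\tilde\ff(S)$ is needed and why a direct one-step $o$-approximation of arbitrary $M$ from $\ff(X)$ cannot succeed.
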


The proof of the proposition relies on the following algebraic lemma.  It is  known for pre-Hilbert spaces
but for the sake of completeness we prove a more general version of \cite[Theorem 34.9]{MaMa}.

\begin{lem}\label{p:9}
Let $E$ be a Hausdorff topological vector space, $V$ a dense linear subspace
of $E$, and $N$ a closed linear subspace of $E$ such that $d(E/N)<\infty$.
Then $V\cap N$ is dense in $N$.
\end{lem}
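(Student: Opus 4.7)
The plan is to exploit the finite-dimensionality of the quotient $E/N$ and the fact that a dense subspace of a finite-dimensional Hausdorff topological vector space must be the whole space. Let $\pi:E\to E/N$ be the quotient map. Since $N$ is closed, $E/N$ is a Hausdorff topological vector space of finite dimension, hence carries the unique Hausdorff vector topology. Because $V$ is dense in $E$ and $\pi$ is continuous and surjective, $\pi(V)$ is a dense linear subspace of $E/N$; but every dense linear subspace of a finite-dimensional Hausdorff TVS coincides with the ambient space, so $\pi(V)=E/N$.

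Next, pick $v_1,\dots,v_k\in V$ such that $\pi(v_1),\dots,\pi(v_k)$ form a basis of $E/N$. The coordinate functionals $\phi_1,\dots,\phi_k:E/N\to \mathds K$ associated with this basis are linear, hence automatically continuous in the finite-dimensional setting. Now fix $n\in N$. Since $V$ is dense in $E$, there exists a net $(v_\alpha)_\alpha$ in $V$ with $v_\alpha\to n$. Define
\[
w_\alpha := v_\alpha-\sum_{i=1}^{k}\phi_i(\pi(v_\alpha))\,v_i.
\]
Each $w_\alpha$ lies in $V$ and satisfies $\pi(w_\alpha)=\pi(v_\alpha)-\sum_i\phi_i(\pi(v_\alpha))\,\pi(v_i)=0$, so $w_\alpha\in V\cap N$.

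It remains to show $w_\alpha\to n$. Continuity of $\pi$ gives $\pi(v_\alpha)\to\pi(n)=0$, and continuity of each $\phi_i$ gives $\phi_i(\pi(v_\alpha))\to 0$. By continuity of scalar multiplication, $\sum_i \phi_i(\pi(v_\alpha))\,v_i\to 0$ in $E$, whence $w_\alpha=v_\alpha-\sum_i\phi_i(\pi(v_\alpha))\,v_i\to n$. This proves that $V\cap N$ is dense in $N$.

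The only subtle point is the invocation of the uniqueness of the Hausdorff topology on finite-dimensional vector spaces, which is needed both to conclude $\pi(V)=E/N$ and to guarantee that the coordinate functionals are continuous; everything else is a routine linear correction to force the approximating net into $N$.
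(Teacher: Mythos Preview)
Your proof is correct. The core idea---approximate $n$ by a net in $V$, then subtract off a linear combination of fixed vectors in $V$ to kill the component transverse to $N$---is exactly the paper's idea as well. The difference is purely one of packaging: the paper picks a topological complement $F$ of $N$, treats the codimension-one case by hand (finding a single $v_0\in V\setminus N$ and correcting $v_\gamma$ by a scalar multiple of $v_0$), and then reaches the general case by a chain $E=N_0\supset N_1\supset\cdots\supset N_m=N$ of codimension-one steps. You instead pass to the quotient $E/N$, invoke the uniqueness of the Hausdorff topology on a finite-dimensional space to get $\pi(V)=E/N$ and continuity of the coordinate functionals, and do the correction in a single stroke for all $k$ coordinates. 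Your route is a little slicker and avoids the induction; the paper's route is more elementary in that it never explicitly appeals to the structure theorem for finite-dimensional Hausdorff TVS (though it uses it implicitly when asserting that $F$ is a topological complement).
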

\begin{proof}
Let $F$ be an algebraic complement of $N$ in $E$. Since $d(F)<\infty$, $F$ is also a topological complement.

{\rm(i)} To prove that $V\cap N$ is dense in $N$, we first consider the case that $d(F)=1$.
Since by the assumptions $V$ cannot be contained in $N$, there are $x_0\in N$ and $f\in F\setminus\{0\}$ such that $v_0:=x_0+f\in V$.
Let $x\in N$ and $v_\gamma\in V$ with $v_\gamma\rightarrow x$. Let $x_\gamma\in N$ and $\lambda_\gamma$ be
scalars with $v_\gamma=x_\gamma+\lambda_\gamma f$. Then $x_\gamma\rightarrow x$ and $\lambda_\gamma\rightarrow 0$.
Therefore $ v_\gamma-\lambda_\gamma v_0=x_\gamma-\lambda_\gamma x_0\in V\cap N$ and converges to $x$.
This proves that $V\cap N$ is dense in $N$.

{\rm(ii)} Let now $d(F)=m$ and  $N_i$ be closed linear subspaces of $E$ such that
$E=N_0\supset N_1\supset\dots\supset N_m=N$ and $d(N_{i-1}/N_i)=1$ for $i=1,\dots m$.
Inductively for $i=0,\dots m$ one sees using {\rm (i)} that $V_i:=V\cap N_i$ is dense in $N_i$.
Hence $\overline{V\cap N}=\overline{V_m}=N_m=N$.
\end{proof}

\begin{proof} [Proof of Proposition \ref{p:10}]  For the proof we can replace $X$ with $U:=\spn X$.

{\rm (i)} Let $M\in P(S)$, $\dim M^{\oc{S}}<\infty$.  By Lemma \ref{p:9}  $M\cap U$ is dense in $M$ hence $(M\cap U)^{\oc{S}\oc{S}}=M^{\oc{S}\oc{S}}=M$.
 Therefore $\mathscr F(M\cap U)\uparrow (M\cap U)^{\oc{S}\oc{S}}=M$.

{\rm(ii)} Let us show that for every $A\in\el$ the net $\tilde{\mathscr F}(A^{\oc{S}})$ decreases
to $A$ in $\el$. Clearly, $\mathscr F(A^{\oc{S}})\uparrow A^{\oc{S}}$, hence $\tilde{\mathscr F}(A^{\oc{S}})\downarrow A^{\oc{S}\oc{S}}=A$
(by Proposition \ref{p:4}).

{\rm (iii)} By {\rm(i)} $\mathscr F(U)$ is dense in $\tilde{\mathscr F}(S)$ w.r.t. $\tau_o(\el)$.
Since $\tilde{\mathscr F}(A^{\oc{S}})\subseteq \tilde{\mathscr F}(S)$ for $A\in \el$, {\rm (ii)} implies that $\tilde{\mathscr F}(S)$ is
dense in $(\el,\tau_o(\el))$. These two facts imply the thesis.
\end{proof}

A \emph{state} on the pre-Hilbert space logic $(S,\el)$ is a normalized and positive additive function on $\el$, i.e. a function $s:\el\to[0,1]$
satisfying $s(S)=1$ and
\begin{equation}\label{e:4}
s(A_1\vee A_2)=s(A_1)+s(A_2)\text{ whenever }A_1,A_2, A_1\vee A_2\in\el\text{ and }A_1\bot A_2.
\end{equation}
Let us denote by $(S,\el)_*$ the set of o-continuous states on $(S,\el)$.
\begin{rem}\label{r:5}
Every state $s$  on $(S,\el)$ is isotone and satisfies $s(A)+s(A^\perp)=1$ for any $A\in\el$; so a state
$s$ is $o$-continuous if and only if $s(A_\gamma)\uparrow s(A)$ whenever $A_\gamma\uparrow A$ in $\el$.
\end{rem}



We now present the main result of the paper.

\begin{thm}[Main Theorem]\label{thm}  Let $(S,\el)$ be a pre-Hilbert space logic such that $\dim S=\aleph_0$.  The following are equivalent.
\begin{enumerate}[{\rm(i)}]
  \item $\tau_o(\el)$ is a Hausdorff topology.
  \item $S$ is a Hilbert space.
  \item $(S,\el)_*\neq \emptyset$.
\end{enumerate}
\end{thm}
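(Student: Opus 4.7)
The plan is to take $(ii)$ as the hub and prove $(ii)\Leftrightarrow(iii)$ and $(ii)\Leftrightarrow(i)$ separately.

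For $(ii)\Rightarrow(iii)$, I would construct a density-operator state: fix an ONB $(e_n)$ of $S$ and weights $\lambda_n>0$ with $\sum_n \lambda_n=1$, then define $s(A):=\sum_n \lambda_n\|P_A e_n\|^2$, which is well-defined because in a Hilbert space every orthogonally closed subspace is closed. Additivity on orthogonal pairs and $s(S)=1$ are routine, and by Remark \ref{r:5} $o$-continuity reduces to showing $s(A_\gamma)\uparrow s(A)$ for $A_\gamma\uparrow A$; this follows from strong convergence of projections along monotone nets combined with monotone convergence for the series.

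For $(iii)\Rightarrow(ii)$, let $s$ be an $o$-continuous state. For any MONS $(e_n)$ of $S$, Proposition \ref{p:4}(iii) together with the MONS property gives $\spn\{e_1,\dots,e_n\}\uparrow S$ in $\el$, so additivity and $o$-continuity yield $\sum_n s([e_n])=1$. The goal then is to derive from this countable additivity -- applied to \emph{every} MONS of $S$, with the freedom provided by Proposition \ref{unitary} to transport one MONS to another -- that every MONS $X$ satisfies $X^{\oc S\oc S}=S$, which is equivalent to completeness of $S$ by the characterization of \cite[Theorem 2.2]{BCD1} used in the proof of Theorem \ref{t:1}. This is the technical part of the ``state side'' of the theorem.

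For $(ii)\Rightarrow(i)$, I would invoke the classical Hausdorffness of $\tau_o(\f(H))$ for a separable Hilbert space $H$ (see \cite{Pa,BCW}). Since $\el\subseteq\f(S)$, Corollary \ref{c:5} gives $\tau_o(\f(S))|_{\el}\subseteq\tau_o(\el)$, so $\tau_o(\el)$ is finer than a Hausdorff topology and is therefore Hausdorff.

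The main obstacle is $(i)\Rightarrow(ii)$. Arguing contrapositively, suppose $S$ is incomplete; pick $e\in\overline S\setminus S$ with $\|e\|=1$ and unit vectors $u_n\in S$ with $u_n\to e$ in $\overline S$. The aim is to exhibit two distinct elements of $\el$ each lying in the other's $\tau_o$-closure. A plausible strategy is to use Proposition \ref{p:10} with two carefully chosen dense subsets $X_1\subseteq X_2$ of $S$ related to $e$: both $\ff(X_1)$ and $\ff(X_2)$ are $\tau_o$-dense in $\el$, and one tries to show that the common net $\ff(X_1)$ -- which $o$-converges to $S$ in $\el$ -- also accumulates, in the weaker $\tau_o$ sense, at a proper subspace reached by enlarging to $X_2$, the discrepancy being forced by $e\notin S$. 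Combining this with the $o$-continuity of finite-dimensional perturbations (Proposition \ref{p:1}) to bracket suitable subnets inside $\el$ should produce the required failure of separation. The crux -- and the part I expect to require the most care -- is keeping the construction entirely inside $\el$ (not merely in $\f(\overline S)$), and verifying that the two candidate $\tau_o$-limits are genuinely distinct and genuinely both limits of the same net.
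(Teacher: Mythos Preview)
Your hub structure is reasonable and the easy directions are fine, but there are two genuine problems.

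\medskip
\textbf{The argument for $(iii)\Rightarrow(ii)$ is circular as written.} You propose to deduce completeness from the condition ``every MONS $X$ of $S$ satisfies $X^{\oc S\oc S}=S$''. But this condition is \emph{automatic}: if $X$ is a MONS of $S$ then by definition $X^{\oc S}=\{0\}$, hence $X^{\oc S\oc S}=S$ always, complete or not. The characterization from \cite{BCD1} that you cite concerns splitting of a subspace $U\subseteq S$, and is vacuous when $U=S$. So the step you flag as ``the technical part'' is not merely technical---the target statement is trivially true and therefore cannot imply completeness. The paper avoids this entirely: it does \emph{not} attempt a direct $(iii)\Rightarrow(ii)$. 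Instead it first establishes $(i)\Leftrightarrow(ii)$, and then argues $(iii)\Rightarrow(i)$ for the logic $\p(S)$: an $o$-continuous state restricts to $\p(S)$ (Corollary~\ref{c:5}), is nonzero on some finite-dimensional subspace, and then Proposition~\ref{p:13} (whose proof uses Proposition~\ref{unitary} and the $\varepsilon$-chain Lemma~\ref{chain}) shows that the orbit $\langle s\rangle$ separates $\p(S)$, so $\tau_o(\p(S))$ is functionally Hausdorff. Note that this makes the hard implication $(i)\Rightarrow(ii)$ logically prior to $(iii)\Rightarrow(ii)$, contrary to your hub layout.

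\medskip
\textbf{The strategy for $(i)\Rightarrow(ii)$ does not match what actually works.} Your plan---two dense subsets $X_1\subseteq X_2$ and Proposition~\ref{p:10}---only tells you that $\ff(X_i)$ is $\tau_o$-dense in $\el$; it gives no mechanism for producing two \emph{distinct} elements that cannot be separated. The paper's route is quite different and much more concrete. From incompleteness one extracts (Proposition~\ref{p:11}) a net $(x_\gamma)$ in the unit ball with $[x_\gamma+y]\oconverges[0]$ in $\p(S)$ for \emph{every} $y\in S$; combined with the $o$-continuity of finite-dimensional perturbations (Proposition~\ref{p:1}) this yields $[x_\gamma+y]+F\xrightarrow{\tau_o(\el)}F$ for every $y$ and every finite-dimensional $F$. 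Iterating this (Lemma~\ref{l:13}) shows that every nonempty $\tau_o(\el)$-open set $\mathcal U$ contains, for any prescribed countable set $X\subseteq S$, an increasing sequence $(V_n)$ of finite-dimensional subspaces whose union is norm-dense in $\overline{\spn X}$. Taking $X$ to be a countable MONS of $S$---this is exactly where $\dim S=\aleph_0$ enters---forces $\bigvee_n V_n=S$, so $S\in\overline{\mathcal U}$ for \emph{every} nonempty open $\mathcal U$. Hence no two points can be separated. Your sketch has none of this machinery; in particular, the ``moving atom'' phenomenon $[x_\gamma+y]\oconverges[0]$ for all $y$ is the engine of the proof and does not appear in your outline.
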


\section{Proof of Main Theorem}

Let $(S,\el)$ be a pre-Hilbert space logic, where $S$ is any infinite dimensional pre-Hilbert space.    As we go along the proof, we shall explicitly indicate
where the assumption $\dim S=\aleph_0$ is needed; it will become clear that the implications ${\rm(ii)}\Rightarrow {\rm(iii)}\Rightarrow {\rm(i)}$ hold
without this assumption.

When $(S,\el)$ is a symmetric pre-Hilbert space logic, $s$ is a state on $(S,\el)$ and $U\in\uu(S)$,
the function $s^U:A\mapsto s(UA)$ defines a state on $\el$. In addition, if $s$ is o-continuous then so is $s^U$.
We define $\langle s\rangle:=\{s^U:U\in\uu(S)\}$.

\subsection*{To show {\rm(i)}$\Rightarrow${\rm(ii)}.}
\begin{lem}\label{l:1} Let $A,B$ be linear subspaces of a vector space $V$ and let $x$, $y$ be vectors in $V$ such that $x+A\subseteq y+B$.
Then $x+B = y+B$, $A\subseteq B$ and $[x]+A\subseteq [y]+B$.
\end{lem}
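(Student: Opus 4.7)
The plan is to unwind the hypothesis $x+A\subseteq y+B$ by evaluating it at particular elements, starting with $a=0$ to locate $x$ relative to $y$ in the coset $y+B$, and then leveraging linearity.

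First I would take $a=0\in A$ in the inclusion $x+A\subseteq y+B$. This gives $x\in y+B$, so there exists $b_0\in B$ with $x=y+b_0$. Hence the affine coset identity $x+B=(y+b_0)+B=y+B$ drops out immediately, settling the first conclusion.

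Next, for the inclusion $A\subseteq B$, I would pick an arbitrary $a\in A$ and use the hypothesis to write $x+a=y+b$ for some $b\in B$. Subtracting the relation $x=y+b_0$ from step one gives $a=b-b_0\in B$ (since $B$ is a linear subspace). This yields $A\subseteq B$.

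Finally, for $[x]+A\subseteq [y]+B$, I would take any element of the form $\lambda x+a$ with $\lambda$ a scalar and $a\in A$, and substitute $x=y+b_0$ to get $\lambda x+a=\lambda y+(\lambda b_0+a)$. Since $B$ is a linear subspace, $a\in A\subseteq B$, and $\lambda b_0\in B$, we have $\lambda b_0+a\in B$, so $\lambda x+a\in [y]+B$ as required.

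There is no real obstacle here; the lemma is a purely affine/linear statement, and the only subtlety worth flagging is that one must use step one (not merely $A\subseteq B$) to handle the scalar multiple of $x$ in the third assertion, since $x$ itself need not lie in $[y]+B$ without recording the explicit witness $b_0$.
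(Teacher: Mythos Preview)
Your proof is correct and follows essentially the same approach as the paper: both arguments begin by observing $x=y+b_0$ for some $b_0\in B$, deduce $x+B=y+B$ and $A\subseteq B$, and then use these two facts together to obtain $[x]+A\subseteq [y]+B$. The only cosmetic difference is that the paper phrases the last step as $x\in y+B\subseteq [y]+B$ and $A\subseteq B\subseteq [y]+B$ (using that $[y]+B$ is a linear subspace), whereas you compute $\lambda x+a=\lambda y+(\lambda b_0+a)$ explicitly.
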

\begin{proof} Since $x = y+b$ for some $b\in B$ it follows that
\[
x+B = y+b+B = y+B \supseteq x +A\,,
\]
and therefore $A\subseteq B$.  The last statement follows since $x\in x+B = y+B \subseteq [y]+B$ and $A\subseteq B\subseteq [y]+B$.
\end{proof}

\begin{lem}\label{l:2} Let $V$ be a vector space. Let $(A_\gamma)$ be a net of linear subspaces of $V$ and $(x_\gamma)$ a net in $V$ such
that $(x_\gamma + A_\gamma)$ is decreasing, and $\bigcap_\gamma x_\gamma +
  A_\gamma\subseteq \bigcap_\gamma A_\gamma$. Then $\bigl([x_\gamma] +
  A_\gamma\bigr)$ is decreasing and $\bigcap_\gamma \bigl([x_\gamma] +
  A_\gamma\bigr) =\bigcap_\gamma A_\gamma$.
\end{lem}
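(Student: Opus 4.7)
The plan is to prove the monotonicity assertion first, since it falls right out of Lemma \ref{l:1}. From the hypothesis $x_\delta + A_\delta \subseteq x_\gamma + A_\gamma$ for $\delta \ge \gamma$, Lemma \ref{l:1} (applied with $x=x_\delta$, $y=x_\gamma$, $A=A_\delta$, $B=A_\gamma$) simultaneously yields $A_\delta \subseteq A_\gamma$ and $[x_\delta] + A_\delta \subseteq [x_\gamma] + A_\gamma$, so the $[x_\gamma]+A_\gamma$ are decreasing and the $A_\gamma$ themselves are decreasing.

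For the intersection identity, the inclusion $\bigcap_\gamma A_\gamma \subseteq \bigcap_\gamma([x_\gamma]+A_\gamma)$ is trivial. For the reverse inclusion, I would pick $x \in \bigcap_\gamma([x_\gamma]+A_\gamma)$ and write $x = \lambda_\gamma x_\gamma + a_\gamma$ for each $\gamma$, with scalars $\lambda_\gamma$ and $a_\gamma \in A_\gamma$. If $x_\gamma \in A_\gamma$ for every $\gamma$, then $[x_\gamma]+A_\gamma = A_\gamma$ and there is nothing to prove. Otherwise there exists $\gamma_0$ with $x_{\gamma_0} \notin A_{\gamma_0}$, and the entire argument is localised to the cofinal tail $\delta \ge \gamma_0$.

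The core algebraic step, which I expect to be the main obstacle, is to show that the $\lambda_\delta$ stabilise on this tail. Using the monotonicity $x_\delta + A_\delta \subseteq x_{\gamma_0}+A_{\gamma_0}$, one writes $x_\delta = x_{\gamma_0} + c$ with $c \in A_{\gamma_0}$, and comparing the two expressions $x = \lambda_{\gamma_0} x_{\gamma_0} + a_{\gamma_0} = \lambda_\delta x_\delta + a_\delta$ delivers
\[
(\lambda_{\gamma_0}-\lambda_\delta)x_{\gamma_0} = \lambda_\delta c + a_\delta - a_{\gamma_0} \in A_{\gamma_0},
\]
since $A_\delta \subseteq A_{\gamma_0}$. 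Because $x_{\gamma_0} \notin A_{\gamma_0}$, this forces $\lambda_\delta = \lambda_{\gamma_0} =: \lambda$ for all $\delta \ge \gamma_0$.

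The conclusion is then easy and is where the standing hypothesis $\bigcap_\gamma (x_\gamma+A_\gamma) \subseteq \bigcap_\gamma A_\gamma$ is finally used. If $\lambda = 0$, then $x = a_\delta \in A_\delta$ for every $\delta \ge \gamma_0$, so by directedness $x \in A_\gamma$ for every $\gamma$. If $\lambda \ne 0$, then $x/\lambda = x_\delta + a_\delta/\lambda \in x_\delta + A_\delta$ for all $\delta \ge \gamma_0$, hence by directedness $x/\lambda \in \bigcap_\gamma(x_\gamma+A_\gamma) \subseteq \bigcap_\gamma A_\gamma$, and again $x \in \bigcap_\gamma A_\gamma$. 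This completes the reverse inclusion.
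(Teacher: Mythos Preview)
Your proof is correct and follows essentially the same approach as the paper: Lemma~\ref{l:1} handles monotonicity, and then a scalar-stabilisation argument exploits the coset inclusions to push an element of $\bigcap_\gamma([x_\gamma]+A_\gamma)$ into $\bigcap_\gamma(x_\gamma+A_\gamma)$ (after scaling), where the hypothesis finishes the job. The only cosmetic difference is that the paper argues by contradiction and pivots on $x\notin A_\gamma$ (writing $x_\gamma=\lambda_\gamma x+a_\gamma$), whereas you argue directly and pivot on $x_{\gamma_0}\notin A_{\gamma_0}$ (writing $x=\lambda_\gamma x_\gamma+a_\gamma$), which forces you to treat the case $\lambda=0$ separately; both routes are equally short.
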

\begin{proof} The nets $(A_\gamma)$ and  $\bigl([x_\gamma] +
  A_\gamma\bigr)$ are decreasing by Lemma \ref{l:1}.
Seeking a contradiction, let $x\in \bigcap_\gamma \bigl([x_\gamma] +
  A_\gamma\bigr)\setminus \bigcap_\gamma A_\gamma$. One may assume
  that $x\notin A_\gamma$ for every $\gamma$. Then $x\in ([x_\gamma] +
  A_\gamma) \setminus A_\gamma$ for every $\gamma$, and therefore $x = \lambda'_\gamma
  x_\gamma + a'_\gamma$, where $\lambda'_\gamma \neq 0$ and $a'_\gamma
  \in A_\gamma$. Rearranging, we get $x_\gamma = \lambda_\gamma
  x + a_\gamma$, where $\lambda_\gamma \neq 0$ and $a_\gamma
  \in A_\gamma$. Thus, for $\gamma_1\leq \gamma_2$,
\[
\lambda_{\gamma_2}x \in x_{\gamma_2} + A_{\gamma_2} \subseteq
x_{\gamma_1} + A_{\gamma_1} =  \lambda_{\gamma_1}x + A_{\gamma_1}
\]
implying that $(\lambda_{\gamma_2} - \lambda_{\gamma_1})x \in
A_{\gamma_1}$, and therefore $\lambda_{\gamma_1} =
\lambda_{\gamma_2}$. Consequently, $\lambda_{\gamma_1}x =
\lambda_{\gamma_2}x \in x_{\gamma_2} + A_{\gamma_2}$ and this leads to
the following contradiction:
\[
\lambda_{\gamma_1}x \in \bigcap_{\gamma\geq\gamma_1} x_\gamma +
A_\gamma = \bigcap_{\gamma} x_\gamma +
A_\gamma\subseteq \bigcap_{\gamma} A_\gamma\,.
\]
\end{proof}

\begin{prop}\label{p:11}
A pre-Hilbert space $S$ is incomplete if and only if  there exists a net $(x_\gamma)$ in the unit ball of $S$ such that
$[x_\gamma+y]{\overset{o}{\longrightarrow}}[0]$
in $\p(S)$ for every $y\in S$.
\end{prop}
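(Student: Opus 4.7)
\noindent\textit{Proof strategy.} The two directions call for different techniques, and I would handle them separately.

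For ``net exists $\Rightarrow S$ is incomplete'' I would argue the contrapositive, assuming $S$ is a Hilbert space. The key observation is that for a bounded net $(z_\gamma)$ in $S$, $[z_\gamma] \oconverges [0]$ in $\p(S)$ forces either $z_\gamma = 0$ eventually or $z_\gamma \to 0$ weakly. Indeed, a decreasing witness $(b_\gamma) \subseteq \p(S)$ with $\bigcap b_\gamma = \{0\}$ can contain only finite-dimensional members (in which case dimension stabilization forces $b_\gamma = \{0\}$, hence $z_\gamma = 0$, eventually) or only co-finite-dimensional ones; in the latter case $b_\gamma = F_\gamma^{\oc{S}}$ with $(F_\gamma)$ an increasing net of finite-dimensional subspaces of $S$ whose union must be dense (by completeness), and then $z_\gamma \perp F_\gamma$ combined with this density yields $\langle z_\gamma, w\rangle \to 0$ for every $w \in S$. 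Applying this dichotomy to $z_\gamma := x_\gamma$ and to $z_\gamma := x_\gamma + y$ for a fixed $y \neq 0$, the four resulting combinations all give a contradiction; the cleanest is the case when both tend weakly to $0$, where $\|y\|^2 = \langle x_\gamma + y, y\rangle - \langle x_\gamma, y\rangle \to 0$.

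For the forward direction, I would fix $x^* \in \overline{S}\setminus S$ with $\|x^*\| \le 1$, take $\Gamma := \ff(S)$ under inclusion, and set
\[
x_G := P_G(x^*)\quad (G \in \ff(S)),
\]
the orthogonal projection of $x^*$ onto the finite-dimensional (hence splitting) subspace $G \subseteq S$; this places $x_G$ in the unit ball. For each $y \in S$, put $w_y := x^* + y \in \overline{S}\setminus S$ and
\[
V_y := \{v\in S : \langle v,w_y\rangle = 0\},\qquad F_G^y := G \cap V_y,\qquad b_G^y := (F_G^y)^{\oc{S}}.
\]
With $a_G \equiv [0]$ below and $(b_G^y)$ above as witnesses, I would claim $[x_G+y] \oconverges [0]$ in $\p(S)$. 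The routine checks are that $b_G^y \in \p(S)$, that $(b_G^y)$ decreases in $G$, and that for $v \in F_G^y$ the self-adjointness of $P_G$ together with $v \in G \cap V_y$ gives
\[
\langle x_G + y, v\rangle = \langle P_G(x^*), v\rangle + \langle y, v\rangle = \langle x^*, v\rangle + \langle y, v\rangle = \langle w_y, v\rangle = 0,
\]
so $x_G + y \in b_G^y$.

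The main obstacle --- and the only place that uses incompleteness --- is the identity $\bigcap_G b_G^y = \{0\}$. Since $\bigcup_G F_G^y = V_y$ (take $G := [v]$ for each $v \in V_y$), this reduces via Proposition \ref{p:4} to the claim $V_y^{\oc{S}} = \{0\}$. Although $V_y$ is only a codimension-one hyperplane of $S$, Lemma \ref{p:9} applied with $E := \overline{S}$, the dense $V := S$, and the codimension-one $N := \{w_y\}^{\oc{\overline{S}}}$ shows that $V_y = S \cap \{w_y\}^{\oc{\overline{S}}}$ is in fact \emph{dense} in $\{w_y\}^{\oc{\overline{S}}}$; continuity of the inner product on $\overline{S}$ then forces $V_y^{\oc{S}} \subseteq \{w_y\}^{\oc{\overline{S}}\oc{\overline{S}}} \cap S = [w_y] \cap S = \{0\}$, the last equality using precisely $w_y \notin S$. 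This mismatch between the algebraic codimension of $V_y$ in $S$ and the topological fullness of its closure in $\overline{S}$ --- impossible when $S$ is Hilbert --- is what the construction is ultimately exploiting, and it is the main point to keep in focus throughout the forward direction.
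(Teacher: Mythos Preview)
Your proof is correct, and in the forward direction you are in fact building \emph{the same} net and the same witnesses as the paper: one checks by taking orthogonal complements that your $b_G^y=(G\cap V_y)^{\oc S}$ coincides with the paper's $[P_G(x^*+y)]+G^{\oc S}$. The genuine divergence is in how each argument verifies $\bigcap_G b_G^y=\{0\}$. The paper works with the affine cosets $P_G(x^*+y)+G^{\oc S}$, shows their intersection is empty (any common point would agree with $x^*+y$ on all inner products with $S$), and then invokes the purely linear-algebraic Lemma~\ref{l:2} to pass to the linear spans. You instead compute $\bigcap_G b_G^y=V_y^{\oc S}$ directly and use Lemma~\ref{p:9} to get density of $V_y$ in $\{w_y\}^{\oc{\overline S}}$. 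Your route is perhaps more conceptual---it makes transparent \emph{why} incompleteness matters, namely that the algebraic hyperplane $V_y$ becomes topologically full---while the paper's route stays entirely inside elementary linear algebra and needs no topological-vector-space input.

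For the converse your argument differs more substantially from the paper's. The paper uses weak compactness of the unit ball to extract a weak cluster point $x$ of $(x_\gamma)$, and then the (weak) closedness of each $A_\gamma$ traps $x+y$ in $\bigcap_\gamma A_\gamma=\{0\}$ for any $y\neq -x$. You instead exploit the concrete structure of $\p(S)$ to show that any bounded $(z_\gamma)$ with $[z_\gamma]\oconverges[0]$ must actually be weakly null; this is more elementary in that it avoids compactness and yields a stronger conclusion than a mere cluster point. Your parenthetical ``(by completeness)'' is placed exactly right: the implication $(\bigcup_\gamma F_\gamma)^{\oc S}=\{0\}\Rightarrow \bigcup_\gamma F_\gamma$ dense in $S$ is precisely where completeness enters and is what fails otherwise. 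One cosmetic remark: since ``eventually zero'' is a special case of ``weakly null'', your four-case split collapses to the single case you wrote out, so the argument is even cleaner than you indicate.
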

\begin{proof}
($\Leftarrow$)  Let $(x_\gamma)_{\gamma\in\Gamma}$ be a net in the unit ball of $S$ such
that $[x_\gamma+y]{\overset{o}{\longrightarrow}}[0]$ in $\mathcal{P}(S)$ for every $y\in S$. Suppose that $S$ is complete.
Then the unit ball of $S$ is weakly compact and therefore $(x_\gamma)$ has a weak cluster point, say $x$. Let $y\in S\setminus\{-x\}$ and
$(A_\gamma)$ a net in $\mathcal{P}(S)$ decreasing to $[0]$ satisfying $[x_\gamma+y]\subseteq A_\gamma$ for all $\gamma\in\Gamma$.
Then for fixed $\gamma\in\Gamma$ one has $x_{\gamma'}+y\in A_\gamma$ for all $\gamma'\geq\gamma$, hence $x+y\in A_\gamma$ since
$A_\gamma$ is closed, hence weakly closed. This shows that $x+y\in\bigcap_\gamma A_\gamma=\{0\},$ hence $x+y=0$, a contradiction.


($\Rightarrow$)   Let $H$ denote the completion of $S$ and let $u\in H\setminus S$. We first show, using Lemma \ref{l:2}, that the net
$\bigl([P_Fu]+F^{\perp_S}\bigr)_{F\in\mathscr F(S)}$ decreases to $[0]$. It is clear that this net is decreasing.  In addition, if $x\in P_Fu+F^{\oc{S}}$
then $\langle x,f\rangle=\langle P_Fu,f\rangle=\langle u,f\rangle$ for every $f\in F$, and therefore, if $x\in\bigcap_{F\in\mathscr F(S)}P_Fu+F^{\oc{S}}$
then $\langle x,f\rangle=\langle u,f\rangle$ for every $f\in S$, implying that $x=u\in H\setminus S$, a contradiction.
Therefore $\bigcap_{F\in\mathscr F(S)}P_Fu+F^{\oc{S}}=\emptyset$, hence
$[P_Fu]+F^{\oc{S}}\downarrow [0]$ by Lemma \ref{l:2}.


Let us verify that the net $(x_F)_{F\in\mathscr F(X)}$ where $x_F:=P_Fv$ and $v$ is a unit vector in $H\setminus S$ satisfies the required properties
to prove the assertion.  Clearly $\Vert x_F\Vert \le \Vert v\Vert =1$.  The vector $v+y$ is not in $S$ when $y\in S$ and therefore, as proved before,
the linear subspaces $\bigl[P_F(v+y)\bigr]+F^{\oc{S}}$ form a net in $\p(S)$ that decreases to $[0]$.  Since
$[x_F+y]\subseteq \bigl[P_F(v+y)\bigr]+F^{\oc{S}}$ it follows that $[x_F+y]{\overset{o}{\longrightarrow}}[0]$ in $\p(S)$.
\end{proof}

\begin{rem}\label{c:12}
So, by Proposition \ref{p:1}, if we suppose that $S$ is incomplete then there exists a net $(x_\gamma)$ in the unit ball of $S$ such that
$[x_\gamma+y]+F{\overset{\tau_o(\el)}{\longrightarrow}}F$ for every $y\in S$ and every finite dimensional linear subspace $F\subseteq S$.
\end{rem}

\begin{lem}\label{l:13}  Suppose that $S$ is incomplete and let $\mathcal U\subseteq \el$ be open w.r.t. $\tau_o(\el)$.
\begin{enumerate}[{\rm(i)}]
\item Let $U\in \mathcal U$ be finite dimensional and  $\{x_1,\dots,x_k\}$ be a finite set of vectors in $S$. For every $\varepsilon >0$, there exists
finite dimensional $V\in\mathcal U$ such that $U\subseteq V$ and $d(x_i,V):=\inf\{\Vert x_i-v\Vert:v\in V\} <\varepsilon$ for every $1\leq i\leq k$.
\item Suppose that $\mathcal U\neq \emptyset$ and let $X$ be a countable set of vectors in $S$.  Then there exists  an increasing sequence
$(V_n)$ of finite dimensional linear subspaces in $\mathcal U$ such that  $X\subseteq \mbox{cl}_S\left({\bigcup_{n} V_n}\right)$.
\end{enumerate}
\end{lem}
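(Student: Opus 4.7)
The plan is to exploit the net $(x_\gamma)$ supplied by Proposition \ref{p:11} (equivalently, Remark \ref{c:12}) as a source of controlled o-convergent perturbations inside the open set $\mathcal{U}$, then bootstrap from one vector to many, and from finitely many to countably many.

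For part (i), I first reduce to the case $k=1$. Given a finite-dimensional $U\in\mathcal{U}$ and $x_1\in S$ (WLOG $x_1\notin U$, else $V:=U$ works), I set $\delta:=\varepsilon/2$ and consider the net of one-dimensional perturbations $W_\gamma:=[x_\gamma-\delta^{-1}x_1]+U$, which lies in $\el$ by closure under finite-dimensional perturbations. Remark \ref{c:12} applied with $y:=-\delta^{-1}x_1$ and $F:=U$ gives $W_\gamma\xrightarrow{\tau_o(\el)}U$, so eventually $W_\gamma\in\mathcal{U}$; moreover $W_\gamma$ contains $\delta(x_\gamma-\delta^{-1}x_1)=\delta x_\gamma-x_1$, and since $\|x_\gamma\|\le 1$ we get $d(x_1,W_\gamma)\le\delta\|x_\gamma\|\le\delta<\varepsilon$. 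Any such $W_\gamma$ serves as $V$. For general $k$, I iterate: starting from $V_0:=U$, I apply the $k=1$ case successively to $(V_{i-1},x_i,\varepsilon)$ to produce finite-dimensional $V_i\in\mathcal{U}$ with $V_i\supseteq V_{i-1}$ and $d(x_i,V_i)<\varepsilon$. Then $V:=V_k$ contains every $V_j$, so $d(x_j,V)\le d(x_j,V_j)<\varepsilon$ for each $j\le k$.

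For part (ii), I first secure a starting point: Proposition \ref{p:10} applied with the dense set $X:=S$ shows that $\ff(S)$, the family of all finite-dimensional subspaces, is dense in $(\el,\tau_o(\el))$, so the nonempty open set $\mathcal{U}$ contains some finite-dimensional $V_0$. Enumerating $X=\{y_1,y_2,\dots\}$, I apply (i) recursively: from $V_{n-1}$, construct $V_n\in\mathcal{U}$ finite dimensional with $V_n\supseteq V_{n-1}$ and $d(y_i,V_n)<1/n$ for every $i\le n$. For fixed $i$, this forces $d(y_i,V_n)\to 0$ as $n\to\infty$, hence $y_i\in\mbox{cl}_S\bigl(\bigcup_n V_n\bigr)$, giving $X\subseteq\mbox{cl}_S\bigl(\bigcup_n V_n\bigr)$.

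The one substantive step is the convergence $W_\gamma\xrightarrow{\tau_o(\el)}U$ in (i), which is precisely where the incompleteness of $S$ enters the argument, through Proposition \ref{p:11} combined with the o-continuity of the finite-dimensional perturbation $A\mapsto A+U$ (Proposition \ref{p:1}); these two facts are already packaged together in Remark \ref{c:12}. The scaling choice $y:=-\delta^{-1}x_1$ is what converts closeness of $W_\gamma$ to $U$ in $\tau_o(\el)$ into metric closeness of $W_\gamma$ to $x_1$ in $S$. With these two ingredients in hand, everything else is a direct recursive construction.
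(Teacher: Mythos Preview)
Your proof is correct and follows essentially the same approach as the paper: both use Remark \ref{c:12} to perturb $U$ by subspaces of the form $[x_\gamma+ny]$ (you write $n=\delta^{-1}$ and change a sign, which is immaterial), then iterate over the finitely many target vectors, and finally bootstrap to the countable case by a diagonal recursion. The only cosmetic difference is that for part~(ii) you invoke Proposition~\ref{p:10} to find a finite-dimensional $V_0\in\mathcal U$, whereas the paper observes more directly that $\ff(M)\uparrow M$ for any $M\in\mathcal U$, so $\ff(M)$ is eventually in $\mathcal U$.
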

\begin{proof}  {\rm(i)}~Fix $\varepsilon>0$.  Let  $n\in\mathds N$ such that $n>1/\varepsilon$.  By Remark \ref{c:12} we can find a net
$(x_\gamma)$ in the unit ball of $S$ such that $[x_\gamma+y]+F\overset{\tau_o(\el)}{\longrightarrow}F$ for every $y\in S$
and finite dimensional linear subspace
$F\subseteq S$.    A straightforward induction shows that there exist $\gamma_1\le\gamma_2\le\cdots\le\gamma_k$  such that
\[V:=[x_{\gamma_k}+nx_k]+[x_{\gamma_{k-1}}+nx_{k-1}+\cdots+[x_{\gamma_{1}}+nx_1]+U\in \mathcal U.\]
Then $x_i+(x_{\gamma_{i}}/n)\in V$ and so $d(x_i,V)\le1/n<\varepsilon$ for every $i\le n$.

{\rm(ii)}~Given $M\in\mathcal U$, the net $\mathcal F(M)$ is eventually in $\mathcal U$ and therefore there exists a finite dimensional  linear subspace
$V_0$ that is an element of $\mathcal U$.  Let $X=\{x_i:i\in\N\}$.  One can apply {\rm(i)} to construct, by induction, an increasing sequence $(V_n)$ of
finite dimensional linear subspaces of $S$, contained in $\mathcal U$, such that $V_0\subseteq V_1$, and $d(x_i,V_n)<1/n$ for all $i\le n$.
\end{proof}

Let us now proceed to the proof {\rm(i)}$\Rightarrow${\rm(ii)}.  Here we need the assumption that $\dim S=\aleph_0$ in order to apply
{\rm(ii)} of Lemma \ref{l:13} with $X$ equal to a (countable) MONS of $S$, to obtain that for every nonempty open set $\mathcal U\subseteq \el$
there exists an increasing sequence $(V_n)$ of finite dimensional linear subspaces, contained in $\mathcal U$, such that
\[S=X^{\oc{S}\oc{S}}\subseteq \bigl(\mbox{cl}_S\bigl(\bigcup_n V_n\bigr)\bigr)^{{\oc S}{\oc S}}=\bigl(\bigcup_n V_n\bigr)^{{\oc S}{\oc S}}=\bigvee_n V_n,\]
i.e. $S$ belongs to the closure of any nonempty open set $\mathcal U\subseteq \el$.


\subsection*{To show {\rm(ii)}$\Rightarrow${\rm(iii)} and {\rm(ii)}$\Rightarrow${\rm(i)}.}

Let $S$ be a Hilbert space. As known, every unit vector $u\in S$ induces the state
\[s_u:\el\to[0,1]:M\mapsto\Vert P_M u\Vert^2,\]
where $P_M$ denotes the projection of $S$ onto $M$.

To see that $s_u$ is o-continuous, in view of Remark \ref{r:5}, one simply needs to recall that when $M_\gamma,M\in\el$ then
$M_\gamma\uparrow M \text{ in }\el$ implies, since $S$ is complete, $M=\left(\bigcup_\gamma M_\gamma\right)^{\oc S\oc S}=
\overline{\bigcup_\gamma M_\gamma}\,,$ hence $\Vert P_{M_\gamma}v\Vert\uparrow\Vert P_M v\Vert$.


It is clear that $\{s_u:u\in S,\,\Vert u\Vert=1\}$ is a separating subset of $(S,\el)_\ast$ and
thus $(\el,\tau_o(\el))$ is (even) functionally Hausdorff.


\subsection*{To show {\rm(iii)}$\Rightarrow${\rm(ii)}.}  Let $s\in(S,\el)_\ast$.  Then, by Corollary \ref{c:5}, the restriction of $s$ to $\p(S)$ belongs to
$(S,\p(S))_\ast$.  Since $\ff(S)\uparrow S$ in $\p(S)$ and $s$ is o-continuous, we have $s(F)\neq 0$ for some finite dimensional linear subspace $F$ of $S$.
Therefore, by the next proposition $(\p(S),\tau_o(\p(S)))$ is functionally Hausdorff and so $S$ is complete because we know
that {\rm(i)} is equivalent to {\rm(ii)}.

\begin{prop}\label{p:13}  Let $s$ be a state on $\p(S)$. The following conditions are equivalent.
\begin{enumerate}[{\rm(i)}]
\item $s$ is not the state assigning $0$ to all the finite dimensional linear subspaces of $S$.
\item $s([x])\neq s([y]) \text{ for some unit vectors }x,y\in S$.
\item For any pair of linearly independent vectors $a,u\in S$ there exists $U\in \uu(S)$ such that $s^U([a])<s^U([u])$.
\item $\langle s\rangle$ separates $\p(S)$.
\end{enumerate}
\end{prop}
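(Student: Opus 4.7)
The plan is to prove the cycle of implications
(iv)~$\Rightarrow$~(i)~$\Rightarrow$~(ii)~$\Rightarrow$~(iii)~$\Rightarrow$~(iv).
The implication (iv)~$\Rightarrow$~(i) is immediate: since $\langle s\rangle$ separates $\p(S)$, it in particular separates $\{0\}$ from any one-dimensional subspace $[v]$, yielding a $U\in\uu(S)$ with $s^U([v])=s([Uv])\neq 0$.  For (i)~$\Rightarrow$~(ii) I argue by contrapositive: if $s([x])=c$ for every unit vector $x\in S$, then additivity on a finite orthonormal system gives $s(\mathrm{span}(e_1,\ldots,e_n))=nc$ for every $n$, forcing $c=0$ and hence $s\equiv 0$ on all finite-dimensional subspaces, contradicting (i).

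For (ii)~$\Rightarrow$~(iii), normalize the given linearly independent $a,u$ to unit vectors $\tilde a,\tilde u$ and let $\lambda:=\langle \tilde a,\tilde u\rangle$, so that $|\lambda|<1$.  By Proposition~\ref{unitary} applied to $(\tilde a,\tilde u)$ and any target pair $(v_1,v_2)$ with the same Gram matrix, it suffices to exhibit unit vectors $v_1,v_2\in S$ with $\langle v_1,v_2\rangle=\lambda$ and $s([v_1])<s([v_2])$.  I would argue the existence of such a pair by contradiction: suppose $s([v_1])\ge s([v_2])$ for \emph{every} such pair.  Applying this assumption twice ---  once to $(v,x_0)$ with $\langle v,x_0\rangle=\lambda$, yielding $s([v])\ge s([x_0])$, and once to $(x_0,e^{i\theta}v)$ where $e^{i\theta}=\bar\lambda/\lambda$, so that $\langle x_0,e^{i\theta}v\rangle=\lambda$ and $[e^{i\theta}v]=[v]$, yielding $s([x_0])\ge s([v])$ --- one deduces $s([v])=s([x_0])$ for every unit $v$ with $\langle v,x_0\rangle=\lambda$, for every unit $x_0\in S$.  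Iterating this ``$\lambda$-step'' relation and exploiting the infinite-dimensionality of $S$ to build short chains connecting any two unit vectors, one concludes that $s$ is constant on atoms, contradicting (ii).

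For (iii)~$\Rightarrow$~(iv), let $A\ne B$ in $\p(S)$.  Using $s^U(M)+s^U(M^{\oc S})=1$ (Remark~\ref{r:5}), we may assume both $A$ and $B$ are finite dimensional.  Put $C:=A\cap B\in\p(S)$ and decompose orthogonally $A=C\oplus A''$, $B=C\oplus B''$ with $A''\cap B''=\{0\}$; additivity gives $s^U(A)-s^U(B)=s^U(A'')-s^U(B'')$.  If $A''=\{0\}$, take nonzero $b\in B''$: (iii) applied to $b$ and any linearly independent auxiliary vector produces $U$ with $s^U([b])>0$, so $s^U(B'')\ge s^U([b])>0=s^U(A'')$.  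If $A''=[a'']$ is one-dimensional (or $B''$ is, by symmetry), pick $b\in B''$ linearly independent from $a''$ and apply (iii) to get $s^U(A'')=s^U([a''])<s^U([b])\le s^U(B'')$.  In the remaining case $\dim A'',\dim B''\ge 2$, select orthonormal bases $\{a''_i\}$ of $A''$ and $\{b''_j\}$ of $B''$ and invoke Proposition~\ref{unitary} to realize configurations with the same cross-Gram matrix but with the atomic $s$-values chosen (via the non-constancy furnished by (ii)) so that the sums $\sum_i s([Ua''_i])$ and $\sum_j s([Ub''_j])$ disagree.

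The two main obstacles are the connectedness argument in (ii)~$\Rightarrow$~(iii) --- making precise that the $\lambda$-step relation on the unit sphere of an infinite-dimensional $S$ induces a single equivalence class --- and the top-dimensional case of (iii)~$\Rightarrow$~(iv), in which one must exploit the full freedom afforded by Proposition~\ref{unitary} to break any latent symmetry between the orthonormal bases of $A''$ and $B''$.
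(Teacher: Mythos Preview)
Your cycle (iv)$\Rightarrow$(i)$\Rightarrow$(ii) is fine and matches the paper. The other two steps both have genuine gaps.

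In (iii)$\Rightarrow$(iv), the reduction ``we may assume both $A$ and $B$ are finite dimensional'' is not valid: duality only lets you replace $(A,B)$ by $(A^{\perp_S},B^{\perp_S})$, so when $A$ is finite dimensional and $B$ is cofinite you cannot force both to be finite. The paper instead only arranges that $A$ is finite dimensional and $B\nsubseteq A$, picks $u\in B\setminus A$, and writes $u=a+b$ with $a\in A$, $b\in A^{\perp_S}$. The crucial observation you are missing is that $A_0:=[a]^{\perp_A}$ is orthogonal to $K:=\spn\{a,u\}$; this orthogonality is exactly what allows one, after first using (iii) to get $s^V([a])<s^V([u])$, to choose a unitary $W$ fixing $K$ and sending $A_0$ into a subspace $B_0\subset K^{\perp_S}$ with $s^V(B_0)$ as small as desired, so that $s^{VW}(A)=s^V([a])+s^V(B_0)<s^V([u])\le s^{VW}(B)$. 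Your ``top-dimensional case'' has no such orthogonality, and the proposal to ``realize configurations with the same cross-Gram matrix'' does not go through: the full Gram matrix of $\{a''_i\}\cup\{b''_j\}$ rigidly couples all the vectors, so you have no mechanism for prescribing the individual values $s([Ua''_i])$, $s([Ub''_j])$ subject to those constraints.

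In (ii)$\Rightarrow$(iii), your phase trick correctly yields $s([v])=s([x_0])$ whenever $|\langle v,x_0\rangle|=|\lambda|$, but the ``obstacle'' you flag is the entire content of the implication, not a detail. The paper handles it by passing to a concrete real $3$-dimensional subspace $V$ containing $a$ and $u$ on which $z\mapsto s([z])$ is non-constant (this requires a short case split to choose a suitable third basis vector $w$), and then proving a geometric lemma: any two points of the unit $2$-sphere in $V$ can be joined by a chain of unit vectors with consecutive distance exactly $\varepsilon$, for every $0<\varepsilon<2$. Applied with $\varepsilon=\|u-a\|$ this produces $x,y$ in the sphere of $V$ with $\langle x,y\rangle=\langle a,u\rangle$ and $s([x])<s([y])$, and Proposition~\ref{unitary} supplies $U$. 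Any honest proof of your global connectedness claim will reduce to essentially this same $2$-sphere lemma, so it cannot be deferred.
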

\begin{proof}
$({\rm (iv)}\Rightarrow {\rm (i)})$ is trivial.

$({\rm (i)}\Rightarrow {\rm (ii)})$. First note that if $F\in\p(S)$ is finite dimensional and $s(F)\ne 0$ one can find a unit vector $u$ in $F$ with
$s([u])>0$.  Since $\sum_{k=1}^{\infty}s([f_k])\le 1$ for
every orthonormal system $(f_k)$ in $S$, one can find a unit vector $v$ with $s([v])<s([u])$.

$({\rm (iii)}\Rightarrow {\rm (iv)})$. Let $A,B\in \mathcal{P}(S)$ and $A\neq B$. Obviously $\langle s\rangle$ separates $A$ and $B$ if
and only if $\langle s\rangle$ separates $A^{\perp_S}$ and $B^{\perp_S}$. Therefore we may assume that $A$ is finite dimensional and
$B\nsubseteq A$. Let $u\in B\setminus A$. Since $A$ is splitting, there are $a\in A$ and $b\in A^{\oc S}$ such
 that $u=a+b$.
By ${\rm(iii)}$ there is $V\in \uu(S)$ such that $s^{V}([a])< s^{V}([u])$.
 Let $A_0:=[a]^{\oc{A}}$ and let $K:=\spn\{u,a\}$.  Since $\sum_{k=1}^{\infty}s([f_k])\le 1$
 for every orthonormal system $(f_k)$ in $K^{\oc{S}}$ there is a linear
subspace $B_0\subseteq K^{\oc{S}}$ such that $\dim B_0=\dim A_0$ and $s^V(B_0)<s^V([u])-s^V([a])$.  By Proposition \ref{unitary} there is a
$W\in \uu(S)$ such that $WA_0=B_0$ and $W$ is the identity on $K$.  Then $U:=VW\in\uu(S)$ and
\[s^U(A)=s^U([a])+s^U(A_0)=s^V([a])+s^V(B_0)< s^V([u])=s^U([u])\leq s^U(B).\]

$({\rm (ii)}\Rightarrow {\rm (iii)})$.  For this we shall need the following lemma.

\begin{lem}\label{chain}
 Let $V$ be a pre-Hilbert space with real linear dimension at least 3,  and let $x\ne y$ be unit vectors in $V$.  For every $0<\varepsilon<2$ there are
unit vectors $e_0=x,e_1,\dots,e_{n+1}=y$ in $V$ such that $\Vert e_i-e_{i-1}\Vert=\varepsilon$ for every $i=1,\dots,n+1$.\\
Consequently, if $f$ is a non-constant function on the unit sphere of $V$ and $0<\varepsilon<2$ then
there are points $x$ and $y$ on the unit sphere such that $\Vert x-y\Vert=\varepsilon$ and $f(x)< f(y)$.
\end{lem}
\begin{proof}
Let us call a finite sequence $(x_0,\dots,x_n)$ in a metric space $(X,\rho)$ a \emph{strict $\varepsilon$-chain} when $\rho(x_i,x_{i-1})=\varepsilon$
for every $i$; in this case we say that $x_0$ and $x_n$ can be connected by a strict $\varepsilon$-chain.  If any two points of $X$ can be connected by a
strict $\varepsilon$-chain, then $X$ is said to be strictly $\varepsilon$-chain connected. We have to prove that the unit sphere of the pre-Hilbert space
$V$ is strict $\varepsilon$-chain connected.  Clearly, it is enough to establish the claim in the case when $\dim V<\infty$.  Since every finite dimensional
Hilbert space is isometric to $\ell_2^k(\R)$, with $k$ equal to the real linear dimension of $V$, it suffices to establish the assertion when $V=\R^3$.
\footnote{Note that this is the minimal value of $k$ for which this is true: indeed, every point on the unit circle is strictly $\varepsilon$-chain connected
to a countable number of points; i.e. the unit sphere of $\R^2$ cannot be strictly $\varepsilon$-connected.}
 Let $\s^2$ denote the unit sphere in $\R^3$ and let $\theta$ denote the circular arc-metric on $\s^2$.  Due to the nature of our claim, it can be observed that it is enough to
 establish the assertion for the metric space $(\s^2,\theta)$ and $0<\varepsilon<\pi$.  For any $v\in\s^2$ and $0<\varepsilon<\pi$ let
 $C(v,\varepsilon):=\{u\in\s^2:\theta(u,v)=\varepsilon\}$.  Observe that $C(v,\varepsilon)=C(-v,\pi-\varepsilon)$ and so if $(x_0,x_1,x_2,\dots,x_n)$ is a
 strict $\varepsilon$-chain, then $(x_0,-x_1,x_2,\dots,(-1)^nx_n)$ is a strict $(\pi-\varepsilon)$-chain.  Fix $x,y\in\s^2$ and let $\delta:=\theta(x,y)$.
 Let $n:=\max\{i\in\N:(i-1)\varepsilon<\delta\}$.  The range of the continuous function $u\mapsto \theta(x,u)$ when restricted to the arc segment $C$
 joining $x$ to $y$ which has minimal length, is $[0,\delta]$.  So there are points $x_0=x, x_1,\dots, x_{n-1}$ on $C$ such that
 $\theta(x,x_{i})=i\varepsilon$ for every $i=0,\dots,n-1$.  So $\theta(x_{i+1},x_i)=\varepsilon$ for every $i<n-2$.  First suppose that
 $0< \varepsilon\le \pi/2$.  Maximality of $n$ together with the assumption $\varepsilon\le\pi/2$ yield that $C({x_{n-1}},\varepsilon)$ and $C(y,\varepsilon)$
 have a point of intersection (in fact, precisely two points).  Let $x_n$ be any one of these points. Let $u$ be a point on $C(y,\varepsilon)$ such that
 $\theta(x_n,u)=\varepsilon$.   Then $(x_0,\dots,x_{n},y)$ and $(x_0,\dots,x_n,u,y)$ are two strict $\varepsilon$-chains in $\s^2$ joining $x$ to $y$.
 If we suppose that $\pi/2 < \varepsilon<\pi$ then take the two strict $\varepsilon'$-chains in $\s^2$ joining $x$ to $y$ with $\varepsilon':=\pi-\varepsilon$,
 as defined above, and choose the one with an odd number of points.  Let us express it as $(e_0,e_1,\dots,e_{2m})$.  Then $(e_0,-e_1,e_2,\dots,e_{2m})$ is
 the required strict $\varepsilon$-chain joining $x$ to $y$.
\end{proof}

Let $u$ and $a$ be linearly independent unit vectors in $S$ and let $\alpha\in\mathbb{C}$, $|\alpha|=1$
satisfy $\langle\alpha u,a\rangle =|\langle u,a\rangle|$. Since $[\alpha u]=[u]$ we may
assume, replacing $u$ by $\alpha u$, that $\langle u,a\rangle\geq 0$.
Let $v$ be a unit vector in $\spn\{a,u\}$ such that $u\bot v$ and $\langle a,v\rangle$ is real.
We define a unit vector $w\in \{u,v\}^{\oc S}$  such that $s([u])$, $s([v])$ and $s([w])$ are not all equal as follows:\\
\noindent\emph{Case 1} If $s(\spn\{u,v\})=0$, let $x\in S$ be a unit vector such that $s([x])>0$ and let $w\in\spn\{u,v,x\}$ be a unit vector
orthogonal to $u$ and
$v$.  Note that $s([w])=s([u])+s([v])+s([w])=s(\spn\{u,v,w\})\ge s([x])>0$.\\
\noindent\emph{Case 2} If $s(\spn\{u,v\})\ne 0$ -- since $\sum_{k=1}^{\infty}s([f_k])\le 1$ for every orthonormal system
$(f_k)$ in $\{u,v\}^{\oc{S}}$ -- there is a unit vector $w\in \{u,v\}^{\oc S}$ such that $s([w])<\max\{s([u]),s([v])\}$.\\
Observe that restriction of the scalar product on the three-dimensional real linear space
$V:=\{\alpha u+\beta v+\gamma w\,:\,\alpha,\beta,\gamma\in\R\}$ is real-valued and that $a=\langle a,u\rangle u+\langle a,v\rangle v\in V$.
Moreover, by the choice of $w$ the function on the unit sphere of $V$ defined by $f(z):=s([z])$ is non-constant and so,
by Lemma \ref{chain}, there are points $x$ and $y$ on the unit sphere of $V$ such that $\Vert x-y\Vert=\Vert u-a\Vert$ and $f(x)< f(y)$.

From
\[2\langle u,a\rangle=\Vert u\Vert +\Vert a\Vert -\Vert u-a\Vert^2=\Vert x\Vert +\Vert y\Vert -\Vert x-y\Vert^2=2\langle x,y\rangle\]
we deduce that $\langle u,a\rangle=\langle x,y\rangle$ and so by Proposition \ref{unitary} there is a unitary operator $U\in\uu(S)$ such that $Uu=x$ and
$Ua=y$.
Then
\[s^U([u])=s([Uu])=s([x])< s([y])=s([Ua])=s^U([a]).\]
\end{proof}
So the proof of the Main Theorem is complete.

\section{Remarks}

\begin{rem} In the proof of Proposition \ref{p:13} we were inclined towards elementariness.  It should be noted, however, that the proof of
$({\rm (ii)}\Rightarrow {\rm (iii)})$
can be simplified if we employ Gleason's Theorem (three-dimensional version).
Let us outline the argument.  Let $a$ and $u$ be linearly independent unit vectors in $S$.  Let $v\in\spn\{u,a\}$ be a unit vector orthogonal with $u$
and let $w\in \{u,v\}^{\oc{S}}$ be a unit vector satisfying that $s([u])$, $s([v])$ and $s([w])$ are not all equal (as explained above).  Let $M:=\spn\{u,v,w\}$.
Then $s'(A):=(s(M))^{-1}\,s(A)$ defines a state on the projection lattice of the three-dimensional Hilbert space $M$ and so by Gleason's theorem there is an
orthonormal basis $\{e_1,e_2,e_3\}$ of $M$ and real numbers $\lambda_1\ge\lambda_2\ge\lambda_3$ such that
$s'([x])=\sum_{i=1}^{3}\lambda_i|\langle x,e_i\rangle|^2$
for every unit vector $x\in M$.
Since $s'$ is not constant on the atoms, it follows that $\lambda_1>\lambda_3$.  Let $U$ be a unitary operator on $S$ such
that $Uu=e_1$,  $Uv=e_3$ and $Uw=e_2$.
Then
\[s^U([x])= s(M)s'([Ux])=s(M)\sum_{i=1}^{3}\lambda_i\langle Ux,e_i\rangle=s(M)\bigl(\lambda_1|\langle x,u\rangle|^2+
\lambda_2|\langle x,w\rangle|^2+\lambda_3|\langle x,v\rangle|^2\bigr),\]
for every unit vector $x\in M$.  Thus $s^U([a])<s(M)\lambda_1=s^U([u])$.
\end{rem}

\begin{rem}  Note that the assumption on $\dim S$ is needed solely to show {\rm(i)}$\Rightarrow${\rm(ii)}.  Our method 
seems to be inadequate for
 bigger dimensions due to the fact that the proof of Proposition \ref{p:1} relies on an elementary linear-algebraic argument
 that does not extend to the case when $F$ is not finite dimensional (as demonstrated in Remark \ref{r:4}).  Consequently,
 the induction in Lemma \ref{l:13} cannot be replaced by a transfinite
 one.  We state as an open problem the following question.
\begin{customprob}{25.i}
  Let $(S,\el)$ be a pre-Hilbert space logic such that $\dim S>\aleph_0$.  Suppose that $\tau_o(\el)$ is Hausdorff.  Does it follow that $S$ is
  complete?
\end{customprob}

All we offer on this regard are the following observations.

\begin{customcor}{25.ii}
Let $(S,\el)$ be a pre-Hilbert space logic such that $S$ is incomplete.
\begin{enumerate}[{\rm(i)}]
\item If $\el$ is atomic $\sigma$-complete then $\tau_o(\el)$ is not a Hausdorff topology.  In particular, $\tau_o(\f(S))$ is not Hausdorff.
\item If $\el$ contains a splitting incomplete subspace with countable orthogonal dimension then $\tau_o(\el)$ is not a Hausdorff topology.
\end{enumerate}
\end{customcor}
\begin{proof}
{\rm(i)}~Let $\mathcal U$ be a nonempty open subset of $\el$.   By Lemma \ref{l:13} the set
\[\mathscr S:=\left\{\left(\bigcup_n V_n\right)^{\oc S\oc S}:(V_n)\text{ is an increasing sequence in $\mathcal U$ and }\dim V_n<\infty\ \forall n\right\}\]
is an upward directed family of orthogonally closed linear subspaces of $S$ satisfying $(\cup\mathscr S)^{\oc S}=\{0\}$.
Since $\el$ is atomicly $\sigma$-complete $\mathscr S\subseteq \el$ and therefore $\mathscr S\subseteq \overline{\mathcal U}$.  Hence
$S\in\overline{\mathcal U}$.

{\rm(ii)}~Let $U\in\el\cap\e(S)$.  We first show that $\p(U)\subseteq \el$.
Let $F$ be a finite dimensional linear subspace of $U$. Obviously $F\in\el$. Moreover
$$F^{\perp_U}=U\cap F^{\perp_S}=U\wedge F^{\perp_S}=(U^{\perp_S}\vee F)^{\perp_S}\in\el.$$
In view of Theorem \ref{t:1} we have $\tau_o(\el)|_{\p(U)}\subseteq\tau_o(\p(U)).$
If $U$ is incomplete and satisfies $\dim S=\aleph_0$, then $\tau_o(\p(U))$ is not Hausdorff by the Main Theorem \ref{thm}
applied to $U$; consequently $\tau_o(\el)$ cannot be Hausdorff.
\end{proof}

As regards to {\rm(ii)} we point out however that -- as the following example shows -- there are pre-Hilbert spaces in which the only splitting linear
subspaces having countable orthogonal dimension are the finite dimensional ones.

\begin{customexmp}{25.iii}\emph{Construction of a pre-Hilbert space with the property that every splitting linear subspace with countable orthogonal dimension is
finite dimensional.}
The general idea follows that of \cite[Theorem 2.2.8]{PtWe}.

Let $S$ be a dense hyperplane of a Hilbert space. First note that then $\e(S)=\ci(S)$ as observed in\cite[Proposition 2.2.3]{PtWe} and therefore
either $\overline{U}=U\subseteq S$ or $\overline{U}^{\perp_H}=U^{\perp_S}\subseteq S$.
In addition, if  $A$ is a linear subspace of $S$, then $d(\overline A/A)\leq 1$ and therefore $A$ has an orthonormal basis
 by \cite[Corollary 9]{BCW1}; in particular, $\dim A\le \aleph_0$ if and only if $A$ is separable.
So the idea is to construct a dense hyperplane $S$ of a Hilbert space $H$ such that $A\nsubseteq S$ and $A^{\oc{H}}\nsubseteq S$ whenever
$A$ is an infinite dimensional, separable, closed linear subspace of $H$.


Let $H$ be a Hilbert space with orthogonal dimension equal to $2^{\aleph_0}$.  Denote by $\mathcal U$ the family of all the closed, separable, infinite dimensional
linear subspaces of $H$.  Then $|\mathcal U|=2^{\aleph_0}$.
Let $\preceq$ denote the smallest well-ordering on $\mathcal U$; i.e.
$[\leftarrow,A]:=\{B\in\mathcal U:B\preceq A\}$ has cardinality less than $2^{\aleph_0}$ for every $A\in\mathcal U$.  We use transfinite induction to construct a
linearly independent set of vectors $\bigcup_{A\in \mathcal U}\{x_A,y_A\}$ such that $x_A\in A$ and $y_A\in A^{\oc{H}}$ for every $A\in\mathcal U$.  For the
induction step, we suppose that the $\bigcup_{B\prec A}\{x_B,y_B\}$ has been constructed.  Bearing in mind that $d(A)=d(A^{\oc{H}})=2^{\aleph_0}$ and that the cardinality
 of $\bigcup_{B\prec A}\{x_B,y_B\}$ is less than $2^{\aleph_0}$,  one observes that there exist vectors $x_A\in A$ and $y_A\in A^{\oc{H}}$
 such that $\bigcup_{B\preceq A}\{x_B,y_B\}$ is linearly independent.
Using Zorn's lemma we can enlarge $\bigcup_{A\in \mathcal U}\{x_A,y_A\}$ to a Hamel basis $D$ of $H$.  Without loss of generality we can suppose that the vectors of
$D$ are unit vectors.  Let $g$ be a bijection between $D$ and $(0,\infty)$ and let $f$ denote the linear functional on $H$ induced  by the map
$D\to (0,\infty):x\mapsto g(x)$.  Then $f$ is an unbounded linear functional  and $S:=\mbox{ker}(f)$ is a dense hyperplane of $H$ satisfying
$A\nsubseteq S$ and $A^{\oc{H}}\nsubseteq S$ for every $A\in\mathcal U$.  Consequently, every splitting linear subspace of $S$ with a countable orthogonal dimension is  finite dimensional.
\end{customexmp}
\end{rem}

\begin{rem} Let  $(S,\el)$ be a pre-Hilbert space logic.  The set of all states on $(S,\el)$ -- being a closed
convex subset of the cube $[0,1]^\el$ -- is a compact convex topological space;  let us denote it by $(S,\el)^\ast$.
The description of the state-space of the various classical pre-Hilbert space logics has its foundations in the celebrated Gleason Theorem.

A state $s\in (S,\el)^\ast$ is said to be:
\begin{itemize}
  \item \emph{$\sigma$-additive}, if $s\bigl(\bigvee_{i\in\N}A_i\bigr)=\sum_{i\in\N}s(A_i)$ holds for every pairwise orthogonal sequence $(A_i)$ in
  $\el$ satisfying $\bigvee_{i\in\N}A_i\in\el$;
  \item \emph{regular}, if for every $A\in \el$ and every $\varepsilon>0$ there exists a finite-dimensional
  linear subspace $F$, contained in $A$, such that $s(F)>s(A)-\varepsilon$;
  \item \emph{free} or \emph{singular}, if $s(F)=0$ for every finite dimensional linear subspace $F$ of $S$.
\end{itemize}
If $S$ is a Hilbert space with $\dim S>2$,  by the Generalized Gleason Theorem,
every state on $(S,\el)$ can be lifted to a positive and normalized linear functional on the algebra
of bounded operators on $S$ (see, for example, \cite{Ha}).   Moreover, the state is
$\sigma$-additive or regular if and only if the corresponding linear functional is o-continuous.

The description of the state space of a pre-Hilbert space logic, for the incomplete case was
initiated in \cite{HaPt}: $S$ is complete if, and only if, $(S,\f(S))^\ast$ contains a $\sigma$-additive
state.  This  was shown to be also true for  $(S,\e(S))$ in \cite{DvPu}. In \cite{DvNePu} it was shown that  $(S,\f(S))$
admits a regular state if and only if $S$ is complete.  This is in contrast to $(S,\e(S))$: the map $A\mapsto \overline A$ defines a function from $\e(S)$ into
$\e(\overline S)$ satisfying that if $A,B\in\e(S)$ and $A\bot B$, then $\overline A\bot \overline B$ and $\overline{A\vee B}=\overline A\vee \overline B$;
thus the function $A\mapsto \Vert P_{\overline A} u\Vert^2$ defines a regular state on $\e(S)$ for every unit vector $u\in \overline S$.
All of these results make use of Gleason's Theorem and the
Amemiya-Araki Theorem: one applies Gleason's Theorem to force orthomodularity of $\f(S)$  and then deduce that
$S$ is complete by invoking the Amemiya-Araki Theorem.  A key observation is the following: \emph{if $s\in(S,\el)^\ast$ and $A,B\in\el$ satisfy $A\subseteq B$ and $A^{\oc{B}}=\{0\}$ then
\[0=s(A^{\oc{B}})=s((A\vee B^{\oc{S}})^{\oc{S}})=1-s(A\vee B^{\oc{S}})=1-s(A)-1+s(B)=s(B)-s(A),\]
i.e. $s(A)=s(B)$}.  This observation is fully exploited in the following theorem.
\begin{customthm}{26.i}
$(S,\el)^\ast$ has a non-free state if and only if $\el\subseteq\e(S)$.
\end{customthm}
\begin{proof}
$(\Rightarrow)$ Suppose that $B\in \el$ and $B\notin \e(S)$, i.e. there exists nonzero $v\in S$ such that
$(B+[v])\cap B^{\oc{S}}=(B^{\oc{S}}+[v])\cap B=\{0\}$.
So $A:=[v]^{\oc{S}}\cap B=([v]+B^{\oc{S}})^{\oc{S}}$
 and $C:=B+[v]$ satisfy that $A\subseteq B\subseteq C$, $A \bot [v]$,  $A^{\oc{B}}=B^{\oc{C}}=\{0\}$ and $A^{\oc{C}}=[v]$.
 Thus $s(A)=s(B)=s(C)$ and $s(C)=s(A+[v])=s(A)+s([v])$. Therefore $s([v])=0$ for every $s\in(S,\el)^\ast$.  But if $s_0$ is some state on $(S,\el)$
that is non-zero on some finite dimensional linear subspace, i.e. $s_0([u])\neq 0$ for some vector $u\in S$, then $s_0^U([v])=s_0([u])$ for a
suitable $U\in\uu(S)$.  \\
$(\Leftarrow)$ If $\el\subseteq \e(S)$, every vector state on $\e(S)$ restricts to a regular (and therefore non-free) state on $\el$.
\end{proof}

Combining this with the Amemiya-Araki Theorem one  readily gets the result of \cite{BuCh}.
\end{rem}

\begin{rem}
The assumption $\dim S=\aleph_0$ of Theorem \ref{thm} is weaker than separability of $S$: It is easy to show that a separable pre-Hilbert space
$S$ has countable  orthogonal dimension (and both conditions are equivalent if $S$ is complete). On the other hand, for any cardinal number $\kappa$
with $\aleph_0<\kappa\leq 2^{\aleph_0}$ there exists a pre-Hilbert space  having a countable orthogonal dimension with topological density equal to
$\kappa$ (see for example \cite[Example 12]{BCW1}); in particular such a pre-Hilbert space is not separable.

Under the stronger assumption of separability we can say the following:  It is easy to see that if
$A_\gamma\uparrow A$  in $\p(S)$ there is an increasing sequence
$(\gamma_i)$ such that
$A_{\gamma_i}\uparrow A$  in $\p(S)$.  A straightforward induction shows, using the orthomodularity of $\p(S)$,  that
\[A_{\gamma_k}=\bigvee_{i=1}^k A_{\gamma_{i-1}}^{\oc{A_{\gamma_{i}}}},\]
where we're letting $A_{\gamma_0}:=\{0\}$.  Hence, $A=\bigvee_{i\in\N}A_{\gamma_{i-1}}^{\oc{A_{\gamma_{i}}}}$.
Thus, if $\el$ is a pre-Hilbert space logic
associated with $S$ and $s$ is a $\sigma$-additive state
on $\el$, since $s$
restricts to a $\sigma$-additive state on $\p(S)$, it follows that $s|_{\p(S)}$ is o-continuous, i.e. $s|_{\p(S)}\in (S,\p(S))_\ast$.
(On the other hand, it is easy to verify that if $s\in(S,\el)_\ast$, then $s$ is $\sigma$-additive on $\el$.)
Thus, in the case when $S$ is separable
the conditions {\rm(i), (ii)} and {\rm(iii)} of the Main Theorem are equivalent to:
\begin{enumerate}[{\rm(iv)}]
  \item $(S,\el)$ admits a $\sigma$-additive state.
\end{enumerate}
 \end{rem}

\end{document}